\newtheorem{theorem}{Theorem}[section]
\newtheorem{lemma}[theorem]{Lemma}
\newtheorem{corollary}[theorem]{Corollary}
\newtheorem{remark}[theorem]{Remark}
\newtheorem{proposition}[theorem]{Proposition}
\newcommand{\KK}{\Bbbk}
\newcommand{\QQ}{\mathbb{Q}}
\newcommand{\ZZ}{\mathbb{Z}}
\newcommand{\id}{\textnormal{id}}
\newcommand{\trace}{\textnormal{Tr}}
\newcommand{\irr}{\textnormal{Irr}}
\newcommand{\hmodfin}{H\textbf{\textrm{-}mod}_{\textrm{fin}}}
\newcommand{\EE}{\textbf{E}}
\newcommand{\lcm}{\textnormal{lcm}}
\newcommand{\ord}{\textnormal{ord}}
\newcommand{\co}{\textnormal{co}\,}
\newcommand\C[1]{#1\mbox{-\bf{mod}}_{\operatorname{\mathsf{fin}}}}
\newcommand\ld[1]{\,_{#1}}
\renewcommand\o{\otimes}
\renewcommand\to{\rightarrow}
\newcommand{\End}{\textnormal{End}}
\newcommand{\Hom}{\textnormal{Hom}}
\def\a{{\alpha}}
\def\b{{\beta}}
\def\e{{\epsilon}}
\def\w{{\omega}}
\def\k{{\Bbbk}}
\def\Tr{\trace}
\def\inv{^{-1}}
\def\du{^{\vee}}
\def\bidu{^{\vee\vee}}
\newtheorem{mainthm}{Theorem}
\title{Hopf algebras of dimension $2p^2$}
\author{Michael Hilgemann}
\address{Department of Mathematics, Iowa State University, Ames, IA 50011, USA.}
 \email{hilgem1@iastate.edu}
 \author{Siu-Hung Ng}
\address{Department of Mathematics, Iowa State University, Ames, IA 50011, USA.}
 \email{rng@iastate.edu}
 \thanks{The research was partially supported by NSA grant no. H98230-05-1-0020.}
\date{September 15, 2008}
\begin{document}

\begin{abstract} Let $H$ be a non-semisimple Hopf algebra of dimension $2p^2$ over an algebraically closed field $\KK$ of characteristic zero, where $p$ is an odd prime.  We prove that $H$ or $H^*$ is pointed, which completes the classification for Hopf algebras of these dimensions.
\end{abstract}

\maketitle

\section*{Introduction}

In recent years, there have been some developments in the classification of finite-dimensional Hopf algebras over an algebraically closed field $\KK$ of characteristic zero.  For $p$ a prime, Hopf algebras of dimension $p$ were shown by Zhu in \cite{Zhu94} to be isomorphic to the group algebra $\KK[\ZZ_p]$.  Hopf algebras of dimension $p^2$ were completely classified in \cite{Masuoka96} and \cite{Ng02}.  In the semisimple case, they are isomorphic to either $\KK[\ZZ_{p^2}]$ or $\KK[\ZZ_p \times \ZZ_p]$, and in the non-semisimple case, they are the Taft algebras of dimension $p^2$.

Semisimple Hopf algebras of dimension $pq$, where $p,q$ are distinct  primes, were first classified by
 Masuoka in \cite{Mas95} for the case $p=2$. For $p$ odd, they were studied by Gelaki and Westreich in \cite{GW00}, and it was proved later by Etingof and Gelaki in \cite{EG98} that these Hopf algebras must be trivial, i.e. they are isomorphic to group algebras or the duals of group algebras.

    It is, in general, an open question as to whether every Hopf algebra over $\KK$ of dimension $pq$, where $p, q$ are distinct primes, is semisimple. However, there have been some partial results which suggest an affirmative answer to the question. Natale showed that a quasi-triangular Hopf algebra of such dimension is semisimple in \cite{Na4}. In \cite{Ng05} and \cite{Ng08}, the second author proved that there is no non-semisimple Hopf algebra of dimension $2p$ for $p$ an odd prime, or of dimension $pq$ with $2 < p < q \leq 4p+11$. The latter improves the main results in \cite{Ng04} and \cite{EG03}.

These established classifications have provided a foundation for the investigation of a general Hopf algebra over $\KK$ whose dimension is a product of three primes. The study of Hopf algebras of dimension $p^3$ has been carried out by Garc\'{i}a \cite{GG}.  In this paper we address Hopf algebras of dimension $2p^2$, for $p$ an odd prime.

    Masuoka began the classification of semisimple Hopf algebras of dimension $2p^2$ in \cite{Masuoka2p^2}. In the paper, he constructed a semisimple Hopf algebra $\mathcal{B}_0$ (denoted by $A_{\xi}$ in that paper) of dimension $2p^2$, which he showed is the unique semisimple Hopf algebra $H$ of such dimension with $|G(H)|=p^2$.
 Natale completed the classification in \cite{Natalepq^2} by showing that non-trivial semisimple Hopf algebras of dimension $2p^2$ are isomorphic to $\mathcal{B}_0$ or $\mathcal{B}_0^*$. Since $G(\mathcal{B}_0^*)\cong \ZZ_{2p}$, $\mathcal{B}_0$ is not self-dual.

 The Hopf algebra $\mathcal{B}_0^*$ is a smash product Hopf algebra $\KK[D_{2p}]^* \# \KK[\ZZ_p]$, where $D_{2p}$ is the dihedral group of order $2p$ (cf. \cite{Natalepq^2}). The left $\KK[\ZZ_p]$-action on $\KK[D_{2p}]^*$ is induced by the right action $\lhd$ of $\ZZ_p=\langle g \rangle$ on $D_{2p}=\langle a, b \mid b^p=1, a^2=1, aba=b\inv\rangle$ as automorphisms given by
 $$
 b \lhd g =b, \quad a \lhd g =ba\,.
 $$

For the case that $H$ is a pointed non-semisimple Hopf algebra of dimension $2p^2$, it was shown by Andruskiewitsch and Natale in \cite{AN01} that $H$ is isomorphic to one of the four types of Hopf algebras listed in Lemma A.1 of that paper.  Specifically, let $j \in \{1, 2, 4, \cdots, 2p-2 \}$ and $\mu \in \{ 0, 1\}$ such that $\mu = 0$ whenever $j \neq 1$.  Also let $\tau \in \KK$ be a $2p$-th root of unity such that the order of $\tau$ is a multiple of $p$, with the order being $p$ in the case that $j = 1$.  Define $\mathcal{A}(\tau, j, \mu)$ to be the 
Hopf algebra generated by elements $g$ and $x$ as an algebra subject to the relations $$g^{2p} = 1, \: x^{p} = \mu(1 - g^p), \: gx = \tau xg,$$
 with comultiplication given by $$\Delta(g) = g \otimes g, \: \Delta(x) = x \otimes 1 + g^j \otimes x.$$
 Then the pointed non-semisimple Hopf algebras of dimension $2p^2$ are isomorphic to exactly one of the following $4(p-1)$ Hopf algebras:
\begin{enumerate}
\item[\rm (i)] $\mathcal{A}(\omega, 1, 0)$, $\omega$ a primitive $p$-th root of unity.
\item[\rm (ii)] $\mathcal{A}(\omega, 1, 1)$, $\omega$ a primitive $p$-th root of unity.
\item[\rm (iii)] $\mathcal{A}(\tau, 2r, 0)$,  $1 \le r \le p-1$, where $\tau$ is  a fixed primitive $2p$-th root of unity.
\item[\rm (iv)] $\mathcal{A}(\omega, 2, 0) \cong T_p \otimes \KK[\ZZ_2]$, $\omega$ a primitive $p$-th root of unity.
\end{enumerate}
Here $T_p$ is a Taft algebra of dimension $p^2$. It has been pointed out in \cite{AN01} that the dual of $\mathcal{A}(\omega, 1, 0)$ is isomorphic to a Hopf algebra of type (iii), and the Hopf algebras of type (iv) are self-dual. Therefore, the dual of a Hopf algebra of type (i), (iii), or (iv) is also pointed.  However, the dual of $\mathcal{A}(\omega, 1, 1)$ is not pointed (cf. \cite{Rad75}).

In \cite{Fukuda}, Fukuda has shown that non-semisimple Hopf algebras of dimension 18 are pointed or have a pointed dual.  In this paper, we complete the classification of Hopf algebras of dimension $2p^2$ by proving the following main theorem:

\begin{mainthm}\label{t:main} If $H$ is a non-semisimple Hopf algebra of dimension $2p^2$ over an algebraically closed field of characteristic zero, where $p$ is an odd prime, then $H$ or $H^*$ is pointed.
\end{mainthm}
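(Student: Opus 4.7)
The plan is to carry out a case analysis on $g = |G(H)|$ and $g^* = |G(H^*)|$, ultimately identifying $H$ (or $H^*$) with one of the pointed Hopf algebras $\mathcal{A}(\tau,j,\mu)$ from the Andruskiewitsch--Natale list reproduced above. By Nichols--Zoeller, both $g$ and $g^*$ divide $2p^2$; since $H$ is non-semisimple and $H^*$ is likewise non-semisimple by Larson--Radford, neither order equals $2p^2$ (that would force the Hopf algebra to coincide with a group algebra of the full dimension). Hence $g, g^* \in \{1,2,p,2p,p^2\}$.

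The crux is to reduce to $g \geq 2p$ or $g^* \geq 2p$. For this I would study the coradical filtration $H_0 \subset H_1 \subset \cdots$ with $H_0 = \KK[G(H)] \oplus \bigoplus_i C_i$, where each $C_i$ is a simple subcoalgebra of dimension $n_i^2 \geq 4$, and symmetrically for $H^*$. Since $\dim C_i \leq 2p^2$ the possible $n_i$ are a priori bounded, and applying Nichols--Zoeller to the Hopf subalgebras generated by such matrix subcoalgebras, together with the dual information that $G(H^*)$ acts on the simple $H$-comodules, pins down $g$, $g^*$, and the $n_i$ quite tightly. The Taft--Wilson theorem supplies nontrivial skew-primitive elements whenever $H$ is not cosemisimple, and a careful dimension count --- patterned on \cite{Ng05,Ng08,Fukuda} --- should rule out every configuration with $g, g^* \in \{1,2,p\}$.

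Once $g \geq 2p$ (the case $g^* \geq 2p$ being dual), the group Hopf subalgebra $K = \KK[G(H)]$ has index $1$, $2$, or $p$ in $H$, and $H$ is $K$-free by Nichols--Zoeller. The Taft--Wilson theorem then produces a nontrivial $(1, g_0)$-skew-primitive $x \in H_1 \setminus H_0$ for some $g_0 \in G(H)$; the conjugation action of $G(H)$ on $x$ defines a character $\chi \in \widehat{G(H)}$, and $x^p \in K$ by counting dimensions modulo the augmentation ideal $K^+ H$. Matching these data with the defining relations of $\mathcal{A}(\tau, j, \mu)$ and invoking the Andruskiewitsch--Natale classification identifies $H$ as one of the four pointed types, completing the theorem.

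The main obstacle will be the second step: ruling out the small-$G$ cases. In dimension $2p^2$ the combinatorial possibilities for $H_0$ are richer than in the $2p$ or $pq$ settings treated in \cite{Ng05,Ng08}, so delicate bookkeeping --- likely involving Radford's formula for $S^4$, the Nichols--Zoeller constraints on comodule multiplicities, and the $G(H^*)$-action on simple $H$-comodules --- will be required to close off every subcase.
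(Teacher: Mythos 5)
Your outline leaves the entire core of the theorem unproved. The reduction you describe -- showing that every configuration with $|G(H)|, |G(H^*)| \in \{1,2,p\}$ is impossible -- is not an auxiliary step but is essentially the whole difficulty, and you offer no actual argument for it beyond naming tools (coradical filtration, Nichols--Zoeller on matrix subcoalgebras, Radford's formula) and conceding that ``delicate bookkeeping will be required.'' The paper does not attempt this reduction at all: it first pins down the order of the antipode to be exactly $2p$ by a trace analysis of $S^2$ on the joint eigenspace decomposition $H e_{g,i} \leftharpoonup e_{\a,j}$ attached to the \emph{distinguished} group-likes (Section 2), and then argues representation-theoretically via the principal module decomposition $H \cong \bigoplus (\dim V) P(V)$, the orbit structure of $\langle \a\rangle$ on $\irr(H)$, and the fact that $\dim P(\KK) \in \{p, 2p\}$ (Section 3). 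In three of the four resulting cases all simple $H$-modules are one-dimensional, so $H^*$ is pointed without ever computing $|G(H)|$; in the last case the paper produces a Taft algebra quotient $H/I \cong T_p$, dualizes the resulting exact sequence, and extracts a $2p$-dimensional semisimple Hopf subalgebra which is identified with the coradical. Nothing in your sketch substitutes for this.

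There is also a concrete error in the step you treat as routine. The Taft--Wilson theorem describes $H_1/H_0$ for \emph{pointed} coalgebras; for a Hopf algebra not yet known to be pointed, non-cosemisimplicity does not produce a nontrivial $(1,g_0)$-skew-primitive. (Radford's example \cite{Rad75}, i.e.\ $\mathcal{A}(\omega,1,1)^*$ here, is non-cosemisimple with no nontrivial skew-primitives.) So even after assuming $|G(H)| = 2p$, your final paragraph does not get off the ground: you must first show that the coradical of $H$ equals $\KK[G(H)]$, which is exactly what the paper obtains from $\ord(S) = 2p$ together with \cite[Proposition 5.1]{AS98} and \cite[Lemma A.2]{AN01}. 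Relatedly, invoking the Andruskiewitsch--Natale list $\mathcal{A}(\tau,j,\mu)$ presupposes that $H$ is already known to be pointed, so as written that step is circular. Finally, note that $|G(H)| = p^2$ is already excluded for non-semisimple $H$ (a $p^2$-dimensional semisimple Hopf subalgebra of index $2$ forces semisimplicity, as in Corollary \ref{2dimcorollary}), so your list of a priori possibilities is slightly off as well.
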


In particular, this implies that there are exactly $5(p-1)$ isomorphism classes of non-semisimple Hopf algebras of dimension $2p^2$ over $\KK$, and we summarize the classification as follows:

\begin{mainthm} Let $H$ be a $2p^2$-dimensional Hopf algebra over an algebraically closed field $\KK$ of characteristic zero, where $p$ is an odd prime.  Then one of the following holds:
\begin{enumerate}
	\item[\rm (a)] $H$ is trivial.
	\item[\rm (b)] $H$ is non-trivial and semisimple and hence isomorphic to either $\mathcal{B}_0$ or $\mathcal{B}_0^*$.
	 \item[\rm (c)] $H$ is non-semisimple and pointed, and  hence $H$ is isomorphic to one of the $4(p-1)$ Hopf algebras of the form $\mathcal{A}(\omega, 1, 0)$, $\mathcal{A}(\omega, 1, 1)$, $\mathcal{A}(\tau, 2r, 0)$, or $\mathcal{A}(\omega, 2, 0)$, as described above.
   \item[\rm (d)] $H$ is neither semisimple nor pointed. In this case, $H \cong \mathcal{A}(\omega, 1, 1)^*$ for some primitive $p$-th root of unity $\omega \in \KK$.
\end{enumerate}
\end{mainthm}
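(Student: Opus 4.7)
The plan is to deduce Theorem~II directly from Theorem~I together with the classification results summarized in the introduction; essentially no new computation is required. I would begin by separating into the semisimple and non-semisimple cases.

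In the semisimple case, either $H$ is trivial -- a group algebra or the dual of a group algebra -- yielding case (a), or $H$ is non-trivial semisimple. In the latter situation, the Masuoka--Natale classification of semisimple Hopf algebras of dimension $2p^2$ (\cite{Masuoka2p^2}, \cite{Natalepq^2}) forces $H \cong \mathcal{B}_0$ or $H \cong \mathcal{B}_0^*$, yielding case (b).

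If $H$ is non-semisimple, the content of Theorem~I is that $H$ or $H^*$ is pointed. If $H$ itself is pointed, then Lemma~A.1 of \cite{AN01}, recalled in the excerpt, places $H$ in exactly one of the four families $\mathcal{A}(\omega,1,0)$, $\mathcal{A}(\omega,1,1)$, $\mathcal{A}(\tau,2r,0)$, $\mathcal{A}(\omega,2,0)$, producing case (c). If $H$ is not pointed but $H^*$ is, then $H^*$ must itself be one of these four families. As recorded in the introduction, Hopf algebras of types (i), (iii), and (iv) each have pointed duals, so if $H^*$ were of one of those three types then $H$ would also be pointed, contradicting our assumption. Hence $H^*$ must be of type (ii), i.e.\ $H^* \cong \mathcal{A}(\omega, 1, 1)$ for some primitive $p$-th root of unity $\omega \in \KK$, and therefore $H \cong \mathcal{A}(\omega,1,1)^*$, yielding case (d). The count of $5(p-1)$ non-semisimple isomorphism classes asserted after Theorem~I is then immediate: $4(p-1)$ from case (c), together with $p-1$ additional classes from case (d), one for each primitive $p$-th root of unity; no overlap can occur because the members of (d) are by construction not pointed.

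The main obstacle -- indeed essentially the only obstacle -- is Theorem~I itself, which is the substantive content of the paper. Once Theorem~I is in hand, the case split above together with the previously established semisimple and pointed classifications of \cite{Masuoka2p^2}, \cite{Natalepq^2}, and \cite{AN01} complete the argument with no further difficulty.
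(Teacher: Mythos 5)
Your proposal is correct and follows essentially the same route the paper takes: Theorem II is obtained by combining Theorem I with the semisimple classification of \cite{Masuoka2p^2} and \cite{Natalepq^2}, the pointed classification of \cite{AN01}, and the observation (recorded in the introduction, via \cite{Rad75}) that among the four pointed families only $\mathcal{A}(\omega,1,1)$ has a non-pointed dual. Your case analysis and the resulting count of $5(p-1)$ non-semisimple isomorphism classes agree with the paper.
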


The remainder of this paper is organized as follows.  We start in Section 1 with some notation and preliminary results that will be useful in proving the main theorem.  We show in Section 2 that a non-semisimple Hopf algebra of dimension $2p^2$ over an algebraically closed field $\KK$ must have an antipode of order $2p$.  Finally in Section 3, we complete the classification of Hopf algebras of dimension $2p^2$ by showing that non-semisimple Hopf algebras of these dimensions are pointed or have a dual which is pointed.

Throughout this paper, $\KK$ denotes an algebraically closed field of characteristic zero unless specifically stated otherwise. The tensor product $\o_\KK$ will be simply denoted by $\o$. The readers are referred to \cite{Montgomery} and \cite{Sweedler} for elementary properties of Hopf algebras.

\section{Notation and Preliminaries}

Let $H$ be a finite-dimensional Hopf algebra over $\KK$ with comultiplication $\Delta$, counit $\epsilon$, and antipode $S$. We will use Sweedler's notation for comultiplication, $\Delta(h) = \sum_{(h)} h_1 \otimes h_2$, or more simply $\Delta(h) = h_1 \otimes h_2$, supressing the summation.  The coassociativity of $\Delta$ induces two natural $H^*$-actions, $\rightharpoonup$ and $\leftharpoonup$, on $H$ given by
$$
f \rightharpoonup h =\sum_{(h)} h_1 f(h_2), \quad \text{and} \quad h \leftharpoonup f = \sum_{(h)} f(h_1) h_2 \quad \text{for } h\in H, \, f \in H^*\,.
$$
A left integral of $H$ is an element $\Lambda \in H$ such that $h\Lambda =\epsilon (h)\Lambda$, and a right integral of $H$ can be defined similarly. The subspace of left (or right) integrals of $H$ is always 1-dimensional. One can easily see that an element $\lambda \in H^*$ is a right
integral if, and only if,
\begin{equation}
  h \leftharpoonup \lambda = \lambda(h)1_H\quad \text{for all }h \in H\,.
\end{equation}
A non-zero element  $a \in H$ is said to be  \emph{group-like} if $\Delta(a) = a \otimes a$. The set of all group-like elements of $H$, denoted by $G(H)$, forms a group under the multiplication of $H$, and  is linearly independent over $\k$. The \emph{distinguished group-like} elements $g \in G(H)$ and $\a \in G(H^*)$ are respectively defined by the conditions:
$$
\Lambda h =\a(h) \Lambda, \quad \lambda \rightharpoonup h= \lambda(h) g\quad \text{for all }h \in H,
$$
where $\lambda \in H^*$ is a non-zero right integral, and $\Lambda \in H$ is a non-zero left integral.
These distinguished group-like elements are closely related to the antipode as indicated by
the celebrated formula of Radford \cite{Radford76}:
\begin{equation}\label{radfordeq}
	S^4(h) = g(\alpha \rightharpoonup h \leftharpoonup \alpha^{-1})g^{-1} \quad \text{for all } h\in H.
\end{equation}
In particular, we find
\begin{equation}\label{eq:ordS^2}
\ord (S^2) \mid 2 \cdot \lcm (\ord(g), \ord(\a))\,.
\end{equation}

The square of the antipode indeed determines the semisimplicity of a Hopf algebra. The following result, which will be used tacitly in the remaining discussion, was mainly proved by Larson and Radford in \cite{LaRa87} and \cite{LaRa88} (see also \cite{Mon01} for an alternative treatment of the result by Montgomery).
\begin{theorem}\label{semisimple} The following statements on a finite-dimensional Hopf algebra $H$ over an algebraically closed field of characteristic zero with antipode $S$ are equivalent.
\begin{enumerate}
	\item[\rm (a)] $H$ is semisimple.
	\item[\rm (b)] $H^*$ is semisimple.
	\item[\rm (c)] $\trace(S^2) \neq 0$.
	\item[\rm (d)] $S^2 = \id_H$. \qed
\end{enumerate}
\end{theorem}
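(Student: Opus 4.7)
The plan is to derive all four equivalences from Radford's trace formula combined with the Larson--Radford theorem identifying semisimplicity with involutoriness in characteristic zero.

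The main bookkeeping tool is Radford's trace identity. Fix a non-zero left integral $\Lam\in H$ and a non-zero right integral $\l\in H^*$ normalized so that $\l(\Lam)=1$. Evaluating the trace of $S^2$ against a pair of dual bases built from $\Lam$ and $\l$ yields
$$
\Tr(S^2)=\l(1_H)\,\e(\Lam).
$$
By Maschke's theorem for Hopf algebras, $H$ is semisimple iff $\e(\Lam)\ne 0$, and dually $H^*$ is semisimple iff $\l(1_H)\ne 0$. Hence $\Tr(S^2)\ne 0$ is equivalent to the conjunction of (a) and (b); in particular (c)$\Rightarrow$(a) is immediate, and (d)$\Rightarrow$(c) follows from $\Tr(\id_H)=\dim H\ne 0$.

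The deeper input is the Larson--Radford theorem \cite{LaRa88}: (a)$\Leftrightarrow$(d). Since $(S^*)^2=(S^2)^*$, condition (d) is self-dual under $H\leftrightarrow H^*$, so applying the theorem to $H^*$ gives (b)$\Leftrightarrow$(d). Together with the trace formula above, this closes all four equivalences.

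The main obstacle is the implication (a)$\Rightarrow$(d). The standard Larson--Radford argument proceeds in two stages. First, a trace analysis \cite{LaRa87} shows that semisimplicity of $H$ in characteristic zero also forces cosemisimplicity; then $H$ and $H^*$ are both unimodular, giving $g=1$ and $\a=\e$, and Radford's formula \eqref{radfordeq} collapses to $S^4=\id_H$, so the eigenvalues of $S^2$ are unimodular. Second, a character-theoretic argument specific to characteristic zero establishes $\Tr(S^2)=\dim H$; since unimodular complex numbers summing to their own number must all equal $1$, we obtain $S^2=\id_H$. Montgomery \cite{Mon01} provides a streamlined exposition of this deep step.
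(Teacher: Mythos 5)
Your proposal is correct and is essentially the same proof the paper intends: the paper states this theorem without argument, citing Larson--Radford \cite{LaRa87, LaRa88} and Montgomery \cite{Mon01}, and your reduction of all four equivalences to the single deep implication (a)$\Rightarrow$(d) via the trace identity $\trace(S^2)=\lambda(1_H)\epsilon(\Lambda)$ and Maschke's theorem is exactly the standard route taken in those references. The only quibble is an attribution swap: ``semisimple forces cosemisimple in characteristic zero'' is the content of \cite{LaRa88}, while the character-theoretic step showing $S^2=\id_H$ for semisimple cosemisimple Hopf algebras is \cite{LaRa87}.
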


Let $a \in H$ be a group-like element of order $m$. Then $\KK[a]$ is a commutative Hopf subalgebra of $H$. Let $\omega \in \KK$ be a primitive $m$-th root of unity and define
\begin{equation}\label{eq:idempotent}
e_{a,j} = \frac{1}{m}\sum_{i\in \ZZ_m} \omega^{-ij}a^i\quad\text{for $j \in \ZZ_m$.}
\end{equation}
Then $\{e_{a,j}\}_{j \in \ZZ_m}$ is a complete set of primitive idempotents of $\KK[a]$, i.e.
$$
e_{a,i}\cdot e_{a,j} =\delta_{ij} e_{a,i}, \quad\sum_{j\in \ZZ_m} e_{a,j}=1\,.
$$
Therefore, for any right $\KK[a]$-module $V$, we always have the $\KK[a]$-module decomposition:
$$
V= \bigoplus_{i \in \ZZ_m} Ve_{a,i} \,.
$$
Note that $\KK[a]=\bigoplus_{j \in\ZZ_m} \KK e_{a,j}$ as ideals of $\KK[a]$.
If $V$ is a free right $\KK[a]$-module, then $\ord(a) \mid \dim V$ and
$$
\dim V e_{a,j} = \frac{\dim V}{\ord (a)}\,.
$$
Obviously, the same conclusion can be drawn for left $\KK[a]$-modules $V$. Moreover, one can construct the complete set of primitive idempotents $\{e_{\beta,j}\}$ for each group-like element $\beta \in H^*$  in the same way.
\begin{lemma}\label{tracelemma}
  Let $H$ be a finite-dimensional Hopf algebra over $\KK$ with antipode $S$. Suppose $a \in G(H)$ and $\beta \in G(H^*)$ such that $\b(a)=1$, and write $H_{ij}$ for $H e_{a,i}\leftharpoonup e_{\beta,j}$
  for  $i \in \ZZ_{\ord(a)}$ and $j \in \ZZ_{\ord(\beta)}$.
  \begin{enumerate}
    \item[\rm (i)]  We have the $\KK$-linear space decomposition $H =\bigoplus\limits_{i,j} H_{ij}$, and
    $$
  \dim H_{ij} = \frac{\dim H}{\ord(a) \cdot \ord(\beta)} \quad\text{for all } i,j\,.
   $$
    \item[\rm (ii)] If $H$ is not semisimple, then $\Tr(S^2|_{H_{ij}})=0$ for all $i,j$.
  \end{enumerate}
\end{lemma}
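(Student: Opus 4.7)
For part (i), the plan is first to verify that the two actions commute. Since $\beta \in G(H^*)$ is an algebra homomorphism $H \to \KK$ and $\beta(a) = 1$, a direct Sweedler calculation gives
\[
(ha) \leftharpoonup \beta = \sum \beta(h_{(1)} a)\, h_{(2)} a = \sum \beta(h_{(1)})\, h_{(2)} a = (h \leftharpoonup \beta)\, a,
\]
so right multiplication by $a$ commutes with $h \mapsto h \leftharpoonup \beta$ on $H$. Iterating in $a^k$ and $\beta^l$ and passing to the idempotents $e_{a,i}$ and $e_{\beta,j}$, the resulting commuting families of projections produce the direct sum $H = \bigoplus_{i, j} H_{ij}$.

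For the dimension count, write $R_x$ for right multiplication by $x \in H$ and $L_f$ for the operator $h \mapsto h \leftharpoonup f$, so the projector onto $H_{ij}$ is $\pi_{ij} = R_{e_{a,i}} L_{e_{\beta,j}}$. Its trace equals $\dim H_{ij}$, and expanding the idempotents yields
\[
\dim H_{ij} = \frac{1}{\ord(a)\ord(\beta)} \sum_{k, l} \omega^{-ki}\, \omega_\beta^{-lj}\, \Tr(R_{a^k} L_{\beta^l})
\]
for appropriate primitive roots of unity $\omega, \omega_\beta$. It suffices to show $\Tr(R_{a^k} L_{\beta^l}) = \dim H$ when $\ord(a) \mid k$ and $\ord(\beta) \mid l$, and zero otherwise. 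By Nichols--Zoeller, $H$ is a free right $\KK[a]$-module, so right translation by a non-identity power of $a$ has trace zero on $H$; dually, $H$ is a free right $\KK[\beta]$-module under $\leftharpoonup$, thanks to the Radford--Frobenius isomorphism $H \cong H^*$ as right $H^*$-modules combined with Nichols--Zoeller for $H^*$. For the mixed case in which both $k$ and $l$ are non-trivial, the fact that $\leftharpoonup \beta$ is an algebra automorphism of $H$ fixing $\KK[a]$ pointwise (again because $\beta(a) = 1$) is used to combine the two free-module structures into a free $\KK[a] \otimes \KK[\beta]$-module structure on $H$, forcing the remaining mixed traces to vanish.

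For part (ii), $S^2$ commutes with $R_{a^k}$ (since $S(a) = a^{-1}$ gives $S^2(a^k) = a^k$) and with $L_{\beta^l}$ (since $\beta \circ S^2 = \beta$ for group-like $\beta$); hence $S^2$ preserves each $H_{ij}$ and $\Tr(S^2|_{H_{ij}}) = \Tr(S^2 \pi_{ij})$. Expanding exactly as in (i),
\[
\Tr(S^2|_{H_{ij}}) = \frac{1}{\ord(a)\ord(\beta)} \sum_{k, l} \omega^{-ki}\, \omega_\beta^{-lj}\, \Tr(S^2 R_{a^k} L_{\beta^l}),
\]
so it suffices to show $\Tr(S^2 \circ R_{a^k} \circ L_{\beta^l}) = 0$ for every $(k, l)$ when $H$ is non-semisimple. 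This follows from a Radford-style trace formula, which expresses $\Tr(S^2 \circ T)$ for an endomorphism $T$ in integral-theoretic terms involving a left integral $\Lambda$ and a right integral $\lambda \in H^*$, in such a way that $\epsilon(\Lambda) = 0$ (the Maschke criterion for non-semisimplicity) forces the expression to vanish. Applying this with $T = R_{a^k} L_{\beta^l}$ yields (ii).

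The main difficulty is identifying the precise integral-theoretic trace formula needed in part (ii) and justifying the joint freeness in the mixed-trace case of (i); both ultimately rest on the interplay between group-like elements of $H$, group-like elements of $H^*$, and the integral theory of finite-dimensional Hopf algebras.
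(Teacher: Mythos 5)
Your overall strategy for part (i) --- commuting projections, Fourier-expanding $\dim H_{ij}$ and $\Tr(S^2|_{H_{ij}})$ in the powers $a^k$, $\beta^l$, and killing the individual traces --- is workable and close in spirit to the paper's, but the step you yourself flag as the ``main difficulty'' is a genuine gap, not a technicality. Knowing that $H$ is free as a right $\KK[a]$-module and free as a right $\KK[\beta]$-module under $\leftharpoonup$, and that the two actions commute, does \emph{not} imply that $H$ is free over $\KK[a]\o\KK[\beta]$: already for two commuting actions of $\KK[\ZZ_2]$ a module can have joint isotypic components of dimensions $2,0,0,2$, which is free over each factor separately but not over the tensor product. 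Equivalently, the vanishing of the pure traces $\Tr(R_{a^k})$ and $\Tr(L_{\beta^l})$ says nothing about the mixed ones: since $R_{a^k}$ acts on $He_{a,i}$ as the scalar $\omega^{ik}$, one has $\Tr(R_{a^k}L_{\beta^l})=\sum_i \omega^{ik}\,\Tr\bigl(L_{\beta^l}|_{He_{a,i}}\bigr)$, so what you actually need is that each $He_{a,i}$ is itself a free $\KK[\beta]$-module. That is the heart of the paper's argument: it makes $He_{a,i}$ into a Hopf module in ${_{\KK[\beta]}\mathcal{M}^{H^*}}$, using the right $H^*$-comodule structure coming from the left $H$-module structure of $He_{a,i}$ together with the left action $x\mapsto x\leftharpoonup\beta^{-1}$, and then applies Nichols--Zoeller a second time. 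Without this step, or some substitute for it, part (i) is not proved.

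Part (ii) is closer to the mark but also left unfinished. The formula you are gesturing at is \cite[Proposition 2(a)]{radfordtrace}, which gives $\Tr(S^2|_{H_{ij}})=\lambda(e_{a,i})\,e_{\beta,j}(\Lambda)$ for a right integral $\lambda\in H^*$ and a left integral $\Lambda\in H$ with $\lambda(\Lambda)=1$; expanding the idempotents reduces everything to the quantities $\lambda(a^k)\beta^l(\Lambda)$. Non-semisimplicity, via $\lambda(1_H)=\e(\Lambda)=0$, only disposes of the terms with $a^k=1$ or $\beta^l=\e$. For the remaining terms you must invoke the integral identities $\lambda(a^k)a^k=\lambda(a^k)1$ and $\beta^l(\Lambda)\beta^l=\beta^l(\Lambda)\e$ together with the linear independence of distinct group-like elements to conclude $\lambda(a^k)=0$ and $\beta^l(\Lambda)=0$. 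So the vanishing is not forced by $\e(\Lambda)=0$ alone, as your sketch suggests; both the precise trace formula and this extra case analysis need to be supplied.
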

\begin{proof}
  (i) By the Nichols-Zoeller Theorem (see \cite{NZ}), $H$ is a free right $\KK[a]$-module. It follows by the preceding remark that $\dim He_{a,i}=\dim H/\ord(a)$ for $i\in \ZZ_{\ord(a)}$, and
  $$
  H=\bigoplus_{i\in \ZZ_{\ord(a)}} H e_{a,i}\,.
  $$
  Since $\beta(a)=1$,
  $$
  (ha^i)\leftharpoonup \beta = (h\leftharpoonup\beta) a^i\quad \text{for all integers $i$ and $h \in H$.}
  $$
  Let $e=e_{a,i}$ for some  $i\in \ZZ_{\ord(a)}$. Then  $(h e) \leftharpoonup \beta =(h\leftharpoonup\beta)e$ for $h \in H$. In particular, $He$ is a right $\KK[\beta]$-module under the action $\leftharpoonup$. Consider the left $\KK[\beta]$-module action on $He$ defined by
  $$
  \beta \rightharpoondown x = x \leftharpoonup (\beta\inv )\quad \text{for all $x \in He$}.
  $$
   Since $He$ is a left $H$-module, it admits a natural right $H^*$-comodule structure $\rho: He \to He \o H^*$ with  $\rho(x) = \sum x^{(0)} \otimes x^{(1)}$ defined by $hx = \sum x^{(0)} x^{(1)}(h)$ for all $h \in H$. It is straightforward to check that $\rho(\beta \rightharpoondown x) = \beta\cdot\rho(x)$ for $x \in He$. Hence,  $He$ is a Hopf module in ${_{\KK[\beta]}\mathcal{M}^{H^*}}$. By the Nichols-Zoeller Theorem, we find $He$ is a free left $\KK[\beta]$-module under the action $\rightharpoondown$. By the preceding remark again, we have
  $$
  He = \bigoplus_j e_{\beta,j}\rightharpoondown He, \quad \text{and}\quad
   \dim \left(e_{\beta,j}\rightharpoondown He\right) =\frac{\dim He}{\ord(\beta)}=\frac{\dim H}{\ord(a)\cdot\ord(\beta)}\,.
  $$
  Since $e_{\beta,j} \rightharpoondown x = x \leftharpoonup e_{\beta, -j}$,  we obtain
  $$
  He = \bigoplus_j H_{ij}, \quad \text{and}\quad
  \dim H_{ij} =\frac{\dim H}{\ord(a)\cdot\ord(\beta)}\,.
  $$
(ii) Let $E_i$ and $F_j$ respectively denote the $\KK$-linear operators on $H$ defined by
$$
E_i(h) = h e_{a,i}, \quad F_j(h) = h \leftharpoonup e_{\beta,j}\quad\text{for $i \in \ZZ_{\ord(a)}$,
$j \in \ZZ_{\ord(\beta)}$.}
$$
Then both $E_i$ and $F_j$ are projections on $H$ and $H_{ij} = F_j E_i(H)$. Since $S^2(a)=a$ and $\beta\circ S^2=\beta$, the operator $S^2$ commutes with both $E_i$ and $F_j$. Therefore, $S^2(H_{ij})\subseteq H_{ij}$, and so 
$$
\Tr(S^2|_{H_{ij}})= \Tr(S^2 \circ F_j \circ E_i) = \Tr(E_i \circ S^2 \circ F_j ).
$$
 It follows by \cite[Proposition 2(a)]{radfordtrace} that
\begin{equation}\label{eq:TrS^2}
\Tr(S^2|_{H_{ij}}) = \lambda(e_{a,i})e_{\beta,j}(\Lambda) = \sum_{l,k} \gamma_{lk} \lambda(a^l) \beta^k(\Lambda)
\end{equation}
for some $\gamma_{lk} \in \KK$, where $\lambda \in H^*$ is a right integral, and $\Lambda \in H$ is a left integral such that $\lambda(\Lambda)=1$. The properties of integrals imply the equalities:
\begin{equation}\label{eq:lL}
\lambda(a^l) a^l = \lambda(a^l) 1, \quad \beta^k(\Lambda) \beta^k = \beta^k(\Lambda)\epsilon
\end{equation}
for all integers $l,k$. Since $H$ is not semisimple, $\lambda(1_H)=\e(\Lambda)=0$.
If $a^l$ is not trivial, then the first equality of \eqref{eq:lL} and the linear independence of distinct group-like elements imply
that $\lambda(a^l)=0$. Similarly, we can also conclude $\beta^k(\Lambda)=0$ for all integers $k$. In view of \eqref{eq:TrS^2}, $\Tr(S^2|_{H_{ij}})=0$ for all $i,j$.
\end{proof}

 Recall that a Hopf subalgebra $A$ of a finite-dimensional Hopf algebra $H$ is \textit{normal} if, and only if, $\sum_{(h)}h_1 a S(h_2), \sum_{(h)}S(h_1) a h_2 \in A$ for all $h \in H$ and $a \in A$.  Since $H$ is finite-dimensional, $A$ is a normal Hopf subalgebra of $H$ if, and only if, $A^+ H = H A^+$, which is in turn equivalent to $A^+ H \subseteq H A^+$ or $H A^+ \subseteq A^+ H$ where $A^+ = A \cap \ker \epsilon$ (see \cite[Section 3.4]{Montgomery} or \cite[Corollary 16]{Ng98} for example).

 Following \cite{Sch93},  we say a sequence of finite-dimensional Hopf algebras
 \begin{equation}\label{eq:exact1}
    1 \to A \xrightarrow{\iota} H \xrightarrow{\pi} B \to 1
 \end{equation}
 is \emph{exact} if $\iota$ and $\pi$ are respectively injective and surjective  Hopf algebra maps such that $\iota(A)$ is normal in $H$ and $\ker \pi =H\iota(A^+)$. In this case, $H$ is called an \emph{extension} of $B$ by $A$. Moreover, the dual sequence
 \begin{equation}\label{eq:exact2}
    1 \to B^* \xrightarrow{\pi^*} H^* \xrightarrow{\iota^*} A^* \to 1
 \end{equation}
 of \eqref{eq:exact1} is also an exact sequence of Hopf algebras.

 The next sequence of propositions on some extensions of finite-dimensional Hopf algebras
 will be used to show the main theorem in the next two sections, and may be of interest in their own right.

\begin{proposition}\label{2dimensional} Let $H, A$ be finite-dimensional Hopf algebras over $\KK$ of dimension $2n$ and $n$, respectively, where $n$ is an odd integer.

\begin{enumerate}[label=(\alph*)]
\item[\rm (a)] If there exists a Hopf algebra surjection $\pi: H \to A$, then $$R = H^{\co\, \pi} = \{ h \in H \mid (\id_H \otimes \pi)\Delta(H) = h \otimes 1_A \}$$ is a normal Hopf subalgebra of $H$ isomorphic to $\KK[\ZZ_2]$.
\item[\rm (b)] If $A$ is a Hopf subalgebra of $H$, then $A$ is a normal Hopf subalgebra of $H$.
\end{enumerate}
\end{proposition}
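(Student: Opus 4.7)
The plan is to prove (a) directly by analyzing the structure of the coinvariant subalgebra $R = H^{\co\pi}$, and to deduce (b) from (a) by duality. For (a), the fundamental theorem of Hopf modules yields $H \cong R \otimes A$ as right $A$-comodules and left $R$-modules, so $\dim R = 2$. Choose a basis $\{1, r\}$ of $R$ with $\e(r) = 0$. Since $R$ is a left coideal subalgebra, $\Delta(r) \in H \otimes R$; applying $\id \otimes \e$ and $\e \otimes \id$ forces $\Delta(r) = r \otimes 1 + g \otimes r$ for some $g \in H$ with $\e(g) = 1$, and coassociativity then gives $\Delta(g) = g \otimes g$, so $g$ is group-like. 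Writing $r^2 = \alpha + \beta r$ and applying $\e$ shows $\alpha = 0$, so $r^2 = \beta r$; comparing $\Delta(r^2)$ with $\Delta(r)^2$ yields the commutation relation $rg + gr = \beta(g - g^2)$.

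The key observation is that $N := \langle g, r \rangle$ is automatically a Hopf subalgebra of $H$ (closed under $\Delta$ since $\Delta(g), \Delta(r) \in N \otimes N$, and under $S$ via $S(r) = -g\inv r$), spanned by the at-most-$2m$ elements $\{g^i r^j : 0 \le i < m,\, j \in \{0,1\}\}$ where $m := \ord(g)$. The strategy is to combine Nichols-Zoeller applied to $N \subset H$ with the odd parity of $n$ to eliminate every case except $R \cong \KK[\ZZ_2]$. In the nilpotent case $\beta = 0$, one has $g r g\inv = -r$, and iterating gives $(-1)^m = 1$, so $m$ is even. Since $r$ is a nonzero nilpotent while $\KK[g]$ is semisimple, $r \notin \KK[g]$, and Nichols-Zoeller applied to $\KK[g] \subset N$ then forces $\dim N = 2m$; Nichols-Zoeller on $N \subset H$ gives $2m \mid 2n$, whence $m \mid n$, contradicting $m$ even and $n$ odd.

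In the idempotent case (scale so $r^2 = r$), one proves by induction that $r g^{2k} = g^{2k} r$ while $r g^{2k+1} = g^{2k+1} - g^{2k+2} - g^{2k+1} r$. If $m$ is odd, evaluating at $k = (m-1)/2$ together with $g^m = 1$ yields $g = 1 - 2r$, whence $g^2 = 1$ and so $m \mid 2$; combined with $m$ odd this forces $m = 1$ and consequently $r = 0$, a contradiction. If $m = 2$, then $r = (1-g)/2 \in \KK[g]$ and $R = \KK[g] \cong \KK[\ZZ_2]$. If $m > 2$ is even, a coefficient comparison in $\KK[g] \otimes \KK[g]$ rules out $r \in \KK[g]$, so the same Nichols-Zoeller argument again yields $m \mid n$, a contradiction. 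Therefore $R \cong \KK[\ZZ_2]$. Normality is now automatic by dimension counting: since $R$ is a Hopf subalgebra, Nichols-Zoeller gives $R$-freeness of $H$ from both sides, so $\dim HR^+ = \dim R^+ H = n = \dim \ker\pi$; combined with $\pi(R^+) = 0$ (as $\pi|_R = \e_H \cdot 1_A$), both $HR^+$ and $R^+H$ equal $\ker\pi$, confirming normality.

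For (b), we argue by duality. The inclusion $A \hookrightarrow H$ dualizes to a Hopf algebra surjection $i^* : H^* \twoheadrightarrow A^*$ of dimensions $2n$ and $n$ respectively, with $n$ still odd. Applying (a) to $i^*$ produces a normal Hopf subalgebra $R' = (H^*)^{\co i^*} \cong \KK[\ZZ_2]$ of $H^*$, yielding an exact sequence $1 \to R' \to H^* \to A^* \to 1$ of Hopf algebras; dualizing gives the exact sequence $1 \to A \to H \to (R')^* \to 1$, so $A$ is normal in $H$. The main obstacle throughout is the detailed case analysis in (a), particularly the elimination of $\ord(g)$ odd and $>1$ in the idempotent setting via the iterative propagation of the quadratic commutation identity.
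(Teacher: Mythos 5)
Your overall strategy for (a) is sound and somewhat more hands-on than the paper's: where you derive the commutation relation $rg+gr=\beta(g-g^2)$ from comparing $\Delta(r^2)$ with $\Delta(r)^2$ and then run Nichols--Zoeller on the explicit $2m$-dimensional subalgebra $N=\langle g,r\rangle$ to force $4\mid\dim H$, the paper instead observes that $bxb\inv=\zeta x$ (because $\ker\e|_R$ is one-dimensional and stable under conjugation by $b$), invokes \cite[Proposition 1.8]{AN01} to identify the Hopf subalgebra generated by a non-trivial skew-primitive as $K_\mu(\ord(b),\zeta)$ with $M^2\mid\dim H$, and then kills $\mu$ and forces $M=2$ via $x^2=\gamma x$. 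Your route avoids the citation at the cost of the case analysis; your normality argument (dimension count showing $HR^+=R^+H=\ker\pi$) and your part (b) match the paper's in substance.

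There is, however, one genuine gap: in the idempotent case with $m=2$ you assert ``$r=(1-g)/2$'' with no justification, and this claim does \emph{not} follow from the relations $g^2=1$, $r^2=r$, $rg+gr=g-1$, $\Delta(r)=r\o 1+g\o r$ alone. In Sweedler's $4$-dimensional Hopf algebra $\langle g,x\mid g^2=1,\ x^2=0,\ gx=-xg\rangle$ with $x$ a $(1,g)$-skew primitive, the element $r=(1-g)/2+x$ satisfies all of these relations but does not lie in $\KK[g]$; so the oddness of $n$ must enter at exactly this point, and your step as written would prove something false for $\dim H=4$. The repair is the same Nichols--Zoeller argument you deploy for $m>2$: if $r\notin\KK[g]$ then $\dim N=2m=4$ divides $2n$, contradicting $n$ odd; hence $r\in\KK[g]$, and then $\e(r)=0$ together with $r^2=r$ forces $r=(1-g)/2$, whence $g=1-2r\in R$ and $R=\KK[g]\cong\KK[\ZZ_2]$. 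With that one sentence inserted the proof is complete.
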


\begin{proof} (a) It is well-known that $R$ is a left coideal subalgebra of $H$ of dimension 2, and $HR^+ \subseteq R^+H$. In view of the preceding remark, it suffices to show that $R$
contains a non-trivial group-like element. Let $x$ be a non-zero element of $R$ such that $\epsilon(x)=0$. Then $\{1,x\}$ is a basis for $R$ and $\Delta(x) = a \otimes 1 + b \otimes x$ for some $a, b \in H$. By applying $\id_H \otimes \epsilon$ and $\epsilon \otimes \id_H$ to $\Delta(x)$, we find $a = x$ and $\epsilon(b)=1$.  Noting that $(\id_H \otimes \Delta)\Delta(x) = (\Delta \otimes \id_H)\Delta(x)$, we obtain that $b$ is a group-like element of $H$, and so $x$ is a $(1,b)$-skew primitive element. Since $H$ is finite-dimensional and $x \neq 0$, $b \neq 1$. Note that $\{x\}$ is a basis for $\ker\epsilon|_R$, and $bxb\inv \in R$. Since $\epsilon(bxb^{-1}) = 0$, $bxb^{-1} = \zeta x$ for some primitive $M$-th root of unity $\zeta \in \KK$ where $M \mid \ord(b)$.

Suppose $x$ is  a non-trivial $(1, b)$-skew primitive element. Then, by \cite[Proposition 1.8]{AN01}, $\zeta \neq 1$ and $x, b$ generate a Hopf subalgebra $K \cong K_\mu(\ord(b), \zeta)$ for some $\mu \in \{ 0, 1\}$. In particular, $$M^2 \mid \dim K \mid \dim H.$$

On the other hand, since $\epsilon(x^2) = 0$,  $x^2 = \gamma x$ for some $\gamma \in \KK$.  Therefore, $(bxb^{-1})^2 = \gamma bxb^{-1}$ which  implies $\gamma \zeta^2 =\gamma \zeta$. Thus $\gamma = 0$ and hence $x^2=0$. As a consequence, $K \cong K_0(\ord(b), \zeta)$ and $M=2$ is the nilpotency index of $x$. In particular,
  $4 \mid \dim H$, a contradiction.  Therefore, $x$ must be a trivial $(1, b)$-skew primitive element, and hence $x = \nu (1 - b)$ for some non-zero $\nu \in \KK$. This implies $b \in R$, and so $R = \KK[b]$.

(b) Consider the dual of the inclusion map $A \hookrightarrow H$, which is a Hopf algebra surjection $\pi: H^* \to A^*$.  By part (a), $(H^*)^{\text{co}\: \pi} \cong \KK[\ZZ_2]$ is a normal Hopf subalgebra of $H^*$, and so we have the following exact sequence of Hopf algebras: $$ 1 \to \KK[\ZZ_2] \to H^* \stackrel{\pi}{\rightarrow} A^* \to 1.$$  Dualizing this exact sequence, the result follows.
\end{proof}

\begin{remark}
 Susan Montgomery has pointed out that Proposition \ref{2dimensional} (b) is a generalization of \cite[Proposition 2]{KM} to non-semisimple Hopf algebras of dimension $2n$ with $n$ odd.
\end{remark}

The following result generalizes Lemma 5.1 in \cite{Ng02}.  This proposition and its corollary provide a useful way of eliminating certain cases in the next section, especially since we are dealing with non-semisimple Hopf algebras.

\begin{proposition}\label{groupproposition} Let $H$ be a finite-dimensional Hopf algebra over $\KK$.  If there exist semisimple Hopf subalgebras $A \subseteq H$ and $K \subseteq H^*$ such that $\dim H = \left( \dim A \right) \left( \dim K \right)$ and $\gcd\left( \dim A, \dim K \right) = 1$, then $H$ is semisimple.
\end{proposition}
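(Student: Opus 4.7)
The plan is to construct an exact sequence $1 \to A \to H \to K^* \to 1$ of Hopf algebras with semisimple endpoints, and then invoke the standard fact that such an extension is itself semisimple.

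First, dualizing the inclusion $K \hookrightarrow H^*$ yields a Hopf algebra surjection $\pi : H \twoheadrightarrow K^*$. I would examine the restriction $\pi|_A : A \to K^*$: its image $\pi(A)$ is a Hopf subalgebra of $K^*$, so $\dim \pi(A) \mid \dim K$ by the Nichols--Zoeller theorem, while dualizing the surjection $A \twoheadrightarrow \pi(A)$ gives a Hopf subalgebra $\pi(A)^* \hookrightarrow A^*$, whence likewise $\dim \pi(A) \mid \dim A$. The coprimality hypothesis then forces $\dim \pi(A) = 1$, so $\pi|_A = \e \cdot 1_{K^*}$ and in particular $A^+ \subseteq \ker \pi$.

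Since $\ker \pi$ is a two-sided ideal, $HA^+ \subseteq \ker \pi$; on the other hand, Nichols--Zoeller freeness of $H$ over $A$ yields
\[
\dim HA^+ = \dim H - \dim H/\dim A = \dim H - \dim K = \dim \ker \pi,
\]
hence $HA^+ = \ker \pi$. Since this is two-sided, $A^+ H \subseteq HA^+$, and a left-right symmetric application of Nichols--Zoeller shows $\dim A^+ H = \dim HA^+$, so in fact $A^+ H = HA^+$. Thus $A$ is a normal Hopf subalgebra of $H$ and we obtain the desired exact sequence
\[
1 \to A \hookrightarrow H \xrightarrow{\pi} K^* \to 1.
\]

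Both endpoints are semisimple by hypothesis. The main obstacle is then the last step: invoking the Hopf-algebraic analogue of the classical group-theoretic fact, that an extension of two semisimple Hopf algebras is itself semisimple (a standard result in the Hopf algebra extension literature, provable e.g.\ by a Maschke-type averaging over the two integrals). Granting this, $H$ is semisimple, which completes the argument.
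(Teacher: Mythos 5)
Your argument is correct and follows essentially the same route as the paper: dualize the inclusion of $K$ to obtain the surjection $\pi: H \to K^*$, use the coprimality hypothesis together with Nichols--Zoeller to force $\pi(A)$ to be one-dimensional, and then reduce to the semisimplicity of an extension with semisimple kernel and cokernel. The paper packages the final steps slightly differently --- instead of verifying normality and exactness directly, it identifies $A$ with $H^{\co \pi}$ by a dimension count and cites Schneider's theorem to realize $H$ as a crossed product $A \#_{\sigma} K^*$, whose semisimplicity is Blattner--Montgomery's theorem --- which is precisely the ``standard fact'' you invoke at the end.
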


\begin{proof} Define the Hopf algebra surjection $$\pi: H \cong H^{**} \stackrel{i^*}{\rightarrow} K^*$$ where $i: K \to H^*$ is inclusion. As $\gcd( \dim A, \dim K) = 1$, it follows that the image of $A$ under $\pi$ is one-dimensional.  Moreover, $A^+ \subseteq \ker \pi$ and so $$\pi(a) = \pi(a - \epsilon(a)1) + \pi(\epsilon(a)1) = \epsilon(a)\pi(1)$$ for all $a \in A$.  Therefore $A \subseteq H^{\text{co}\: \pi} = \{h \in H \mid h_1 \otimes \pi(h_2) = h \otimes 1_{K^*}\}$.  But $\dim H^{\text{co}\: \pi} = \dim H / \dim K = \dim A$ and so $A = H^{\text{co}\: \pi}$.  It follows by \cite{Schn92} that $H$ is isomorphic a cross product algebra  $A \#_{\sigma} K^*$ for some 2-cocycle $\sigma$.  By \cite[Theorem 2.6]{blattmontg}, the semisimplicity of $A$ and $K^*$ implies the semisimplicity of $H$.
\end{proof}

\begin{corollary}\label{2dimcorollary} Suppose $n$ is an odd integer.  If $H$ is a $2n$-dimensional Hopf algebra over the field $\KK$, and $H$ contains a semisimple Hopf subalgebra of dimension $n$, then $H$ is semisimple.
\end{corollary}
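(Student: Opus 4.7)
The plan is to combine the two preceding propositions. First I would invoke Proposition \ref{2dimensional}(b), applied with the roles in the proposition played by $H$ (dimension $2n$) and the given semisimple Hopf subalgebra $A$ (dimension $n$, with $n$ odd), to conclude that $A$ is in fact \emph{normal} in $H$. This is the crucial structural input; without normality, the extension theory below is unavailable.

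Next I would form the quotient Hopf algebra $B = H/HA^+$. Normality of $A$ makes $HA^+ = A^+H$ a Hopf ideal, so $B$ is a genuine Hopf algebra, and we get an exact sequence
\begin{equation*}
1 \to A \to H \to B \to 1
\end{equation*}
with $\dim B = \dim H / \dim A = 2$. Any $2$-dimensional Hopf algebra over $\KK$ is isomorphic to $\KK[\ZZ_2]$, so $B$ is semisimple. Dualizing gives the exact sequence
\begin{equation*}
1 \to B^* \to H^* \to A^* \to 1,
\end{equation*}
exhibiting $B^* \cong \KK[\ZZ_2]$ as a semisimple Hopf subalgebra of $H^*$ of dimension $2$.

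At this point I would apply Proposition \ref{groupproposition} with the semisimple subalgebra $A \subseteq H$ of dimension $n$ and the semisimple subalgebra $K = B^* \subseteq H^*$ of dimension $2$. The hypotheses of that proposition are satisfied: $\dim A \cdot \dim K = 2n = \dim H$, and $\gcd(\dim A, \dim K) = \gcd(n,2) = 1$ since $n$ is odd. The conclusion is that $H$ is semisimple, as desired.

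There is no real obstacle here; everything is a matter of assembling the pieces already proved. The one step worth being careful about is confirming that normality of $A$ (from Proposition \ref{2dimensional}(b)) genuinely yields a Hopf algebra quotient $B$, so that dualization produces a \emph{Hopf subalgebra} of $H^*$ rather than merely a subcoalgebra — this is what makes Proposition \ref{groupproposition} applicable on the dual side.
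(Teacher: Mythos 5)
Your proof is correct and follows essentially the same route as the paper: both arguments produce a $2$-dimensional semisimple Hopf subalgebra $\KK[\ZZ_2]$ of $H^*$ via Proposition \ref{2dimensional} and then conclude with Proposition \ref{groupproposition}. The only (harmless) difference is that the paper applies part (a) directly to the dual surjection $H^*\to A^*$ to get $\KK[\ZZ_2]$ normal in $H^*$ in one step, whereas you invoke part (b), form the quotient $B=H/HA^+$, and dualize back --- an extra round trip that lands in the same place.
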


\begin{proof} Let $K$ be a semisimple Hopf subalgebra of $H$ with $\dim K = n$.  Then there is a Hopf algebra surjection $\pi: H^* \to K^*$ and $K^*$ is semisimple.  By Proposition \ref{2dimensional}, we have the exact sequence of Hopf algebras
$$1 \to \KK[\ZZ_2] \to H^* \stackrel{\pi}{\rightarrow} K^* \to 1\,.$$
By Proposition \ref{groupproposition}, $H$ is semisimple.
\end{proof}

The Taft algebras  have been the basic examples of non-semisimple  Hopf algebras (see \cite{Taft71}). The next proposition implies the existence of some semisimple Hopf subalgebra in the dual of an extension of a finite group algebra by a Taft algebra. We will need this result in the proof of Lemma \ref{fourthcase} in the last section.

\begin{proposition}\label{p:es} Let $H$ be a finite-dimensional Hopf algebra over $\KK$ and $A$ a normal Hopf subalgebra of $H$ such that $H/H A^+$ is isomorphic to $\KK[G]$ for some finite group $G$. If the Jacobson radical $J$ of $A$ is a Hopf ideal of $A$, then $HJ$ is a Hopf ideal of $H$, and we have the exact sequence of Hopf algebras:
  $$
  1 \to A/J \to H/H J \to \KK[G] \to 1\,.
  $$
  In particular, $H^*$ admits a semisimple Hopf subalgebra of dimension $|G| \dim (A/J)$.
\end{proposition}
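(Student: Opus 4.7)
The plan is to prove the statement in three stages: (1) establish that $HJ$ is a Hopf ideal of $H$; (2) identify $H/HJ$ as an extension of $\KK[G]$ by $A/J$; (3) dualize to obtain the semisimple Hopf subalgebra of $H^*$.

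For stage (1), note first that $J\subseteq A^+$ because $\epsilon(J)=0$ for a Hopf ideal, so by normality $HJ\subseteq HA^+=A^+H$. The core task is to show $HJ=JH$. Using coassociativity one checks the identities
$$hj=\sum \mathrm{ad}_l(h_{(1)})(j)\,h_{(2)},\qquad jh=\sum h_{(1)}\,\mathrm{ad}_r(h_{(2)})(j),$$
where $\mathrm{ad}_l(h)(a)=\sum h_{(1)}aS(h_{(2)})$ and $\mathrm{ad}_r(h)(a)=\sum S(h_{(1)})ah_{(2)}$ both land in $A$ by normality. Thus $HJ=JH$ reduces to showing that both adjoint actions of $H$ preserve $J$. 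Because $\KK[G]$ is cocommutative and cosemisimple, the extension $1\to A\to H\to\KK[G]\to 1$ is cleft; this provides a crossed-product presentation $H\cong A\#_\sigma\KK[G]$ in which $G$ acts on $A$ by algebra automorphisms. The Jacobson radical being characteristic under algebra automorphisms, the $G$-action preserves $J$, and a direct computation in the crossed product confirms $\mathrm{ad}(H)(J)\subseteq J$. With $HJ=JH$, the coideal property of $HJ$ follows from $\Delta(J)\subseteq J\otimes A+A\otimes J$, and antipode-stability from $S(HJ)=S(J)S(H)\subseteq JH=HJ$.

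For stage (2), set $\bar H:=H/HJ$. By the Nichols-Zoeller theorem, $H$ is free as a right $A$-module; choosing a basis containing $1_H$, one sees $A\cap HJ=J$, so $A/J$ embeds as a Hopf subalgebra of $\bar H$. The composition $\bar H\twoheadrightarrow H/HA^+\cong\KK[G]$ has kernel $HA^+/HJ=(A/J)^+\bar H$, since $A^+/J=(A/J)^+$; the normality of $A/J$ in $\bar H$ is inherited from that of $A$ in $H$, giving the exact sequence $1\to A/J\to\bar H\to\KK[G]\to 1$.

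For stage (3), dualizing produces $1\to\KK^G\to\bar H^{\,*}\to(A/J)^*\to 1$. Since $\bar H$ is a Hopf algebra quotient of $H$, $\bar H^{\,*}$ is a Hopf subalgebra of $H^*$ of dimension $|G|\dim(A/J)$. Both $(A/J)^*$ (by Larson-Radford applied to the semisimple $A/J$) and $\KK^G$ are semisimple, and an extension of two semisimple Hopf algebras is semisimple, so $\bar H^{\,*}$ is the required semisimple Hopf subalgebra of $H^*$. The principal obstacle is the equality $HJ=JH$, which rests on the crossed-product structure of $H$ over the group algebra $\KK[G]$; once that is available, the remaining verifications are routine.
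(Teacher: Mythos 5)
Your overall strategy coincides with the paper's: invoke cleftness of the extension $A\subseteq H$ over $\KK[G]$ to write $H=\bigoplus_{x\in G}A\gamma(x)$ with each $\gamma(x)$ invertible, observe that conjugation by $\gamma(x)$ is an algebra automorphism of $A$ and therefore preserves $J$, conclude $HJ=JH$, pass to the exact sequence $1\to A/J\to H/HJ\to\KK[G]\to 1$, and get semisimplicity from the crossed-product structure (Blattner--Montgomery). Stages (2) and (3) are fine; your identification $A\cap HJ=J$ via a free basis containing $1_H$ is a legitimate alternative to the paper's argument (which instead uses that $HJ$ is nilpotent, so $Aa\subseteq HJ$ forces $a\in J$), and dualizing before or after checking semisimplicity makes no difference by Larson--Radford. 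A minor point: cleftness here is Schneider's normal basis theorem for normal Hopf subalgebras of finite-dimensional Hopf algebras, not a consequence of $\KK[G]$ being cocommutative and cosemisimple; the conclusion is right but the cited reason is not.

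The genuine gap is in your stage (1). You reduce $HJ=JH$ to the claim that the left and right adjoint actions of $H$ preserve $J$, and then assert that ``a direct computation in the crossed product confirms $\mathrm{ad}(H)(J)\subseteq J$.'' This does not follow from what you have established. The crossed-product presentation $H\cong A\#_\sigma\KK[G]$ is only an \emph{algebra} isomorphism; it says nothing about $\Delta(\gamma(x))$ or $S(\gamma(x))$, and the adjoint action $\mathrm{ad}_l(h)(j)=\sum h_1jS(h_2)$ is built from exactly those. In particular $\mathrm{ad}_l(\gamma(x))(j)$ is \emph{not} the conjugate $\gamma(x)j\,\overline{\gamma}(x)$, so the fact that the $G$-action (which is that conjugation) preserves $J$ does not give adjoint stability of $J$. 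Stability of $J$ under the full adjoint action of a Hopf algebra acting on a module algebra is a genuinely delicate question in general; in the present situation it is true, but the clean way to see it is \emph{a posteriori} from $HJ=JH$ (which gives $A\cap HJ=J$ and hence $\mathrm{ad}_l(H)(J)\subseteq A\cap HJH=A\cap HJ=J$) --- using it as an intermediate step toward $HJ=JH$ is circular. The repair is immediate and is what the paper does: skip the adjoint action entirely and compute
$$
JH=\sum_{x\in G}JA\gamma(x)=\sum_{x\in G}J\gamma(x)=\sum_{x\in G}\gamma(x)J=\sum_{x\in G}\gamma(x)AJ=HJ,
$$
using only $\gamma(x)J\overline{\gamma}(x)=J$ and $AJ=JA=J$. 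With that substitution your argument closes up and matches the paper's proof.
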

\begin{proof}   Since $H/HA^+ \cong \KK[G]$
  as Hopf algebras, by \cite{Schn92}, the (right) $\KK[G]$-extension $A \subset H$ is $H$-cleft.
  Therefore, there exists a convolution invertible right $\KK[G]$-comodule map $\gamma: \KK[G] \to H$ with the convolution inverse $\overline{\gamma}$ such that $\gamma(1)=1_H$, and $\gamma(g)A\overline\gamma(g) \subseteq A$ for all $g \in G$, and $\sigma(g,h) = \gamma(g)\gamma(h)\overline\gamma(gh)$ for $g,h \in G$ defines a 2-cocycle on $G$ with coefficients in $A$. Moreover, the $\KK$-linear map $\Phi: A \#_\sigma \KK[G] \to H$ defined by $a \#g \mapsto a \gamma(g)$ is an algebra isomorphism (cf. \cite{Montgomery}). In particular, $H = \bigoplus_{g \in G} A \gamma(g)$ as  $\KK$-linear spaces.

   Notice that $\gamma(g)$ is an invertible element of $H$ with inverse $\overline{\gamma}(g)$ for all $g \in G$, since $\gamma \ast \overline{\gamma}(g) = 1_H$. Therefore, $a\mapsto \gamma(g)a \overline{\gamma}(g)$ defines an algebra automorphism on $A$. In particular, we find $J = \gamma(g) J \overline{\gamma}(g)$. Thus,
  $$
  \gamma(g) A = A \gamma(g), \quad \gamma(g) J = J \gamma(g) \quad \text{for all }g \in G\,.
  $$
  Therefore,
  $$
  JH = \sum_{g \in G} JA\gamma(g) = \sum_{g \in G} J\gamma(g) =\sum_{g \in G} \gamma(g)J
  =\sum_{g \in G}  \gamma(g)A J = HJ,
  $$
  and hence $HJ$ is a nilpotent Hopf ideal of $H$.

  If $a \in A \cap HJ$, then $Aa \subseteq HJ$ is also nilpotent, and so $a \in J$. Therefore, the natural map $\iota: A/J \to H/HJ$ induced from the inclusion map $i: A \to H$ is also injective. Since $A$ is normal in $H$, $\iota(A/J)$ is normal in $H/HJ$. Let $\pi: H/HJ \to H/HA^+$ be the natural surjection. It is immediate to  see that $\ker \pi = HA^+/HJ = (H/HJ)\iota(A/J)^+$. Therefore, the sequence of finite-dimensional Hopf algebras
  $$
  1 \to A/J \xrightarrow{\iota} H/H J \xrightarrow{\pi} H/HA^+ \to 1
  $$
  is exact. Consequently, $H/H J$ is a crossed product $A/J \#_{\tau} \KK[G]$, and hence semisimple by \cite[Theorem 2.6]{blattmontg}. Moreover, $\dim H/HJ = |G|\dim (A/J)$.  Since  $(H/H J)^*$ is isomorphic to a Hopf subalgebra of $H^*$, the second statement follows.
\end{proof}

We close this section with a few results on linear algebra which will be used frequently together with Lemma \ref{tracelemma} to determine the order of an antipode in the next section.  The first two results are known, and we prove the third.

\begin{lemma}\label{linearlemma} Let $V$ be a finite-dimensional vector space over the field $\KK$, $p$ a prime, and $T$ a linear automorphism on $V$ such that $\trace(T) = 0$.
\begin{enumerate}[label=(\alph*)]
\item[\rm (a)] \cite[Lemma 2.6]{AS98} If $T^{2p} = \id_V$, then $\trace(T^p) = pd$ for some integer $d$.
\item[\rm (b)] \cite[Lemma 1.4]{Ng05} If $T^{p^n}=\id_V$ for some positive integer $n$, then $p$ divides the dimension of $V$.
\item[\rm (c)] If $\dim V=p$ and $T^m = \id_V$ for some positive integer $m = 2^n p$, where $p$ is odd, then $T^p = \xi \: \id_V$ for some $2^n$-th root of unity $\xi \in \KK$.
\end{enumerate}
\end{lemma}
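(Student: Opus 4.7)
Since $T^m=\id_V$ and $\KK$ has characteristic zero, $T$ is diagonalizable with eigenvalues among the $m$-th roots of unity in $\KK$. Fix primitive roots $\zeta,\omega\in\KK$ of orders $2^n$ and $p$; then every eigenvalue of $T$ writes uniquely as $\zeta^b\omega^c$ with $(b,c)\in\ZZ_{2^n}\times\ZZ_p$, with multiplicity $m_{b,c}$ summing to $p$. Since $\omega^p=1$ and $b\mapsto pb$ is a bijection on $\ZZ_{2^n}$, $T^p$ is a scalar (necessarily a $2^n$-th root of unity times $\id_V$) if and only if a single value of $b$ occurs. The case $n=0$ is trivial, so assume $n\ge 1$.

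The strategy is to extract two linear-independence consequences from $\trace(T)=\sum_{b,c}m_{b,c}\zeta^b\omega^c=0$. First, because $\gcd(2^n,p)=1$, the cyclotomic polynomial $\Phi_{2^n}(x)=x^{2^{n-1}}+1$ remains the minimal polynomial of $\zeta$ over $\QQ(\omega)$, so $\{1,\zeta,\ldots,\zeta^{2^{n-1}-1}\}$ is a $\QQ(\omega)$-basis of $\QQ(\omega,\zeta)$ with $\zeta^{2^{n-1}}=-1$. Reducing the trace relation to this basis gives $S_b=S_{b+2^{n-1}}$ for $0\le b<2^{n-1}$, where $S_b:=\sum_c m_{b,c}\omega^c\in\ZZ[\omega]$. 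Second, $\{1,\omega,\ldots,\omega^{p-2}\}$ is a $\QQ$-basis of $\QQ(\omega)$ with single relation $1+\omega+\cdots+\omega^{p-1}=0$, so the vanishing $S_b-S_{b+2^{n-1}}=0$ forces the integer $m_{b,c}-m_{b+2^{n-1},c}$ to be independent of $c$; writing $k_b$ for this constant, $n_b-n_{b+2^{n-1}}=pk_b$, where $n_b:=\sum_c m_{b,c}$.

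Let $B:=\{b\in\ZZ_{2^n}:n_b>0\}$; the goal becomes $|B|=1$. If some $b\in B$ has $b+2^{n-1}\notin B$, then $m_{b+2^{n-1},c}=0$ forces $m_{b,c}=k_b$ constant in $c$, and non-negativity together with $1\le n_b=pk_b\le p$ pins down $k_b=1$, $n_b=p$, so $B=\{b\}$. Otherwise $B$ is closed under $b\mapsto b+2^{n-1}$; if $|B|\ge 2$, each pair $\{b,b+2^{n-1}\}\subseteq B$ satisfies $n_b,n_{b+2^{n-1}}\ge 1$ and $n_b+n_{b+2^{n-1}}\le p$, so $|n_b-n_{b+2^{n-1}}|\le p-2<p$, forcing $k_b=0$ and $n_b=n_{b+2^{n-1}}$; summing then gives $p=2\sum_{\text{pairs}}n_b$, contradicting the oddness of $p$.

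The preceding algebraic reductions are standard applications of the Galois theory of cyclotomic fields; the step I expect to demand most care is the final combinatorial analysis on $B$, where the non-negativity of the multiplicities $m_{b,c}$, the divisibility $p\mid n_b-n_{b+2^{n-1}}$, and the parity constraint that $p$ is odd must interact precisely to rule out every configuration except $|B|=1$.
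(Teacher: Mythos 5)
Your proof is correct and follows essentially the same route as the paper's: both arguments use the linear independence of roots of unity over $\QQ$ (you via the CRT splitting $\zeta^b\omega^c$ and the relation module of $p$-th roots of unity, the paper via $\Phi_m(x)\mid f(x)$ and evaluation at $\omega^p$) to conclude that $p$ divides each difference $n_b-n_{b+2^{n-1}}$ of paired eigenspace dimensions of $T^p$, and then both finish by the same counting argument exploiting $\dim V=p$ odd. Your final case analysis on $B$ is a slightly more elaborate but equivalent version of the paper's closing observation that exactly one eigenspace of $T^p$ can be nonzero.
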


\begin{proof} We prove part (c).  The statement is obviously true for $n=0$. We assume $n \geq 1$. Let $\omega \in \KK$ be a primitive $m$-th root of unity, and
  $V_b$ the eigenspace of $T$ associated to the eigenvalue $\omega^b$. We consider the polynomial $f(x)=\sum_{b=0}^{m-1} (\dim V_b) x^b \in \ZZ[x]$. Since
\begin{equation*}\label{traceeq}
 0 = \trace(T) = \sum_{b=0}^{m-1}(\dim V_b) \omega^b = f(\omega),
\end{equation*}
we have $f(x) = g(x) \Phi_m(x)$ for some $g(x) \in \ZZ[x]$ where $\Phi_k$ denotes the $k$-th cyclotomic
polynomial. Therefore,
\begin{equation}\label{eq:div by p}
\trace(T^p) = f(\omega^p) = g(\omega^p)\Phi_m(\omega^p)\,.
\end{equation}
Note that $\{\omega^{p i} \mid i=0, \dots, 2^{n-1}-1\}$ is a basis for $\QQ(\omega^p)$ and
$\Phi_m(x) = \Phi_p(-x^{2^{n-1}})$. Therefore,
\begin{equation}\label{cyclotomic}
\Phi_m(\omega^p) = \Phi_p(-(\omega^p)^{2^{n-1}}) =\Phi_p(1) = p.
\end{equation}
Let $W_i$ be the eigenspace of $T^p$ associated to the eigenvalue $\omega^{pi}$. Since
$$
\omega^{p(i+2^{n-1})} =- \omega^{pi} \: \text{for } i=0, \cdots, 2^{n-1}-1,
$$
we have
$$
\trace(T^p) = \sum_{i=0}^{2^{n-1}-1} (\dim  W_i - \dim W_{2^{n-1}+i})\omega^{pi}\,.
$$
There exists $i$ such that $\dim  W_i - \dim W_{2^{n-1}+i} \neq 0$ otherwise $\dim V$ is even.
By equations \eqref{eq:div by p} and \eqref{cyclotomic}, $p \mid \dim  W_i - \dim W_{2^{n-1}+i}$. Since
$\dim V = p$, only one of the eigenspaces $W_i$, $W_{2^{n-1}+i}$ is non-zero and any other eigenspace of $T^p$ is trivial. Thus $T^p = \xi \: \id_V$ for some $2^n$-th root of unity $\xi \in \KK$.
\end{proof}

\section{The order of the antipode}
{\bf Throughout the remainder of this paper, we will assume that $H$ is a non-semisimple Hopf algebra of dimension $2p^2$ over  $\KK$ with antipode $S$, where $p$ is an odd prime}. We will prove in this section that the antipodes of these Hopf algebras are of order $2p$ (Theorem \ref{t:3}).

 To establish this result, we first consider the distinguished group-like elements $g \in H$ and $\a \in H^*$. Since $4 \nmid \dim H = 2p^2$, it follows by \cite[Corollary 2.2]{Ng05} that  one of the distinguished group-like elements $g$ or $\a$ is non-trivial. In view of the Nichols-Zoeller Theorem,
 we have
 $$
 \lcm( \ord(g), \ord(\alpha) ) = 2, p, 2p, p^2 \text{ or } 2p^2\,.
 $$

  Without loss of generality, we may assume
 \begin{equation}\label{eq:assumption}
   \ord(g)\ge \ord(\a)
 \end{equation}
 by duality. Under this assumption, $\ord(g) > 1$.

 Let us write $e_i$ for the idempotent $e_{g,i} \in \KK[g]$ defined in \eqref{eq:idempotent}, and $f_j$ for $e_{\a, j} \in \KK[\a]$. We define
 \begin{equation}\label{eq:Hij}
    H_{ij} = He_i \leftharpoonup f_j \quad \text{for all }i \in \ZZ_{\ord(g)} \text{ and  } j \in \ZZ_{\ord(\a)}\,.
 \end{equation}
 It follows by Lemma \ref{tracelemma} that
 $$
 \dim He_i = \frac{\dim H}{\ord(g)}\,, \quad \text{and}\quad  \Tr(S^2|_{He_i})=0
 $$
 for all $i \in \ZZ_{\ord(g)}$. If $\a(g)=1$, then we also have
 $$
 \Tr(S^2|_{H_{ij}})=0 \quad \text{for all }i,j.
 $$

 We first eliminate those values of  $\lcm( \ord(g), \ord(\alpha) )$ which are not possible.
\begin{lemma}
The only possible values of $\lcm(\ord(g),\ord(\alpha))$ are $p$ and $2p$.
\end{lemma}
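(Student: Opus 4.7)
The plan is to rule out the three values $2$, $p^2$, and $2p^2$ from the candidate list $\{2, p, 2p, p^2, 2p^2\}$, leaving only $p$ and $2p$.

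First, I would dispose of $\lcm(\ord(g),\ord(\a)) \in \{p^2, 2p^2\}$ together. The standing assumption $\ord(g) \ge \ord(\a)$, combined with $\ord(g) \mid 2p^2$, forces $\ord(g) \in \{p^2, 2p^2\}$ in both cases (the only competing option, $\ord(g)=2p$ with $\ord(\a)=p^2$, is excluded because $2p < p^2$ for odd $p$). If $\ord(g) = 2p^2$, then $\KK[g]$ already has dimension $\dim H$, so $H = \KK[g] \cong \KK[\ZZ_{2p^2}]$ is commutative, hence semisimple, contradicting the standing assumption. If instead $\ord(g) = p^2$, then $\KK[g]$ is a semisimple Hopf subalgebra of $H$ of odd dimension $p^2$, and Corollary \ref{2dimcorollary} (with $n = p^2$) forces $H$ to be semisimple, again a contradiction.

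For the remaining value $\lcm(\ord(g),\ord(\a)) = 2$, I would use the trace machinery. Here $\ord(g) = 2$ and $\ord(\a) \in \{1, 2\}$, so by \eqref{eq:ordS^2} we have $\ord(S^2) \mid 4$, a power of the prime $2$. I would then apply Lemma \ref{tracelemma} with $a = g$ and $\beta = \epsilon$ (so that $\beta(a) = 1$ is automatic and $H_{i0}$ collapses to $He_i$) to conclude $\Tr(S^2|_{He_i}) = 0$ for each $i$. Because $S^2(g) = g$, the idempotents $e_i$ are $S^2$-fixed, so $S^2$ preserves $He_i$ and $T := S^2|_{He_i}$ is a linear automorphism satisfying $T^{2^n} = \id$ with $\Tr(T) = 0$. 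Lemma \ref{linearlemma}(b), applied with prime $2$, then forces $2 \mid \dim He_i = p^2$, contradicting the hypothesis that $p$ is odd.

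There is no substantial obstacle here; the only bookkeeping point is verifying that the duality normalization $\ord(g) \ge \ord(\a)$ funnels both lcm values $p^2$ and $2p^2$ into the situation where either $\KK[g] = H$ or $\KK[g]$ is a semisimple Hopf subalgebra of odd dimension $p^2$, so that a single appeal to Corollary \ref{2dimcorollary} dispatches them uniformly. The case $\lcm = 2$ is then a clean application of the trace/cyclotomic technology developed in Section~1.
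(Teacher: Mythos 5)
Your proof is correct and follows essentially the same route as the paper: Corollary \ref{2dimcorollary} kills the cases where $\ord(g)=p^2$, and the trace vanishing from Lemma \ref{tracelemma} combined with Lemma \ref{linearlemma}(b) (for the prime $2$) kills $\lcm=2$. Your explicit treatment of the subcase $\ord(g)=2p^2$ (where $H=\KK[g]$ is commutative, hence semisimple) is a small point the paper's phrasing ``$\ord(g)=p^2$ in both cases'' passes over, so if anything your write-up is slightly more careful.
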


\begin{proof} Assume that either $\lcm(\ord(g), \ord(\alpha)) = 2p^2$ or $p^2$.  Since $\ord(g) \geq \ord(\alpha)$, it follows that $\ord(g) = p^2$ in both cases.  So $g$ generates a $p^2$-dimensional semisimple Hopf subalgebra of $H$.  By Corollary \ref{2dimcorollary}, $H$ must be semisimple, which is a contradiction.

If $\lcm( \ord(g), \ord(\alpha) ) = 2$ then $\ord(g) = 2$ and $\ord(\alpha) = 1$ or $2$. By Lemma \ref{tracelemma}, $\dim H e_i=p^2$ and $\Tr(S^2|_{H e_i})=0$ for $i \in \ZZ_2$.
 Using Radford's formula \eqref{radfordeq} we have that $S^8 = \id_H$.  In particular, $(S^2|_{He_i})^4=\id_{H e_i}$. It follows by Lemma \ref{linearlemma} (b) that $2 \mid \dim H e_i = p^2$, another contradiction.
\end{proof}

Note that if $\lcm(\ord(g),\ord(\alpha)) = p$ or $2p$, then $\ord(g) = p$ or $2p$ as we are assuming $\ord(g) \geq \ord(\alpha)$.  We first eliminate the possibility that $\ord(g) = p$ and $\ord(\alpha) = 2$.

\begin{lemma}
  The pair $(\ord(g), \ord(\a))$ cannot be $(p,2)$.
\end{lemma}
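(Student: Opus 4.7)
The plan is to assume $(\ord(g), \ord(\alpha)) = (p,2)$ for contradiction. Since $\alpha(g)^p = \alpha(g^p) = 1$ and $\alpha(g)^2 = \alpha^2(g) = 1$ force $\alpha(g) \in \mu_p \cap \mu_2 = \{1\}$, Lemma \ref{tracelemma} applies: $H = \bigoplus_{i \in \ZZ_p,\, j \in \ZZ_2} H_{ij}$ with $\dim H_{ij} = p$ and $\Tr(S^2|_{H_{ij}}) = 0$. By \eqref{eq:ordS^2} we have $\ord(S^2) \mid 4p = 2^2 p$, so Lemma \ref{linearlemma}(c) gives $S^{2p}|_{H_{ij}} = \xi_{ij}\,\id_{H_{ij}}$ for some $\xi_{ij} \in \mu_4$. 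The key observation is that each primitive idempotent $e_i \in \KK[g]$ lies in $H_{i0}$ (since $e_i g = \omega^i e_i$ and $e_i \leftharpoonup \alpha = e_i$ using $\alpha(g)=1$) and is fixed by $S^{2p}$, so $\xi_{i0} = 1$ and $S^{2p}$ acts as the identity on $H^0 := \bigoplus_i H_{i0} = \{h \in H : h \leftharpoonup \alpha = h\}$.

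Iterating Radford's formula \eqref{radfordeq} gives $S^{4p}(h) = \alpha \rightharpoonup h \leftharpoonup \alpha^{-1}$ (using $g^{2p}=1$ and $\alpha^2 = \epsilon$), so for $h \in H^0$ we obtain $h = (S^{2p})^2(h) = S^{4p}(h) = \alpha \rightharpoonup h$. Thus $H^0 \subseteq H^{(0)} := \{h : \alpha \rightharpoonup h = h\}$, and as both spaces have dimension $p^2$ (by Nichols--Zoeller applied to $H$ as a free $\KK[\alpha]$-module under either action), $H^0 = H^{(0)}$. Since $\leftharpoonup \alpha$ and $\alpha \rightharpoonup$ are involutions on $H$ sharing the same $+1$-eigenspace, they coincide, which is equivalent to $\alpha$ being central in $H^*$. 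Hence $\KK[\alpha]$ is a central (and therefore normal) Hopf subalgebra of $H^*$, and $B := H^*/H^*\KK[\alpha]^+$ is a Hopf algebra of dimension $p^2$, so $B$ is either semisimple or isomorphic to a Taft algebra $T_p$. If $B$ is semisimple, then $B^* \hookrightarrow H$ is a semisimple Hopf subalgebra of dimension $p^2$, and Corollary \ref{2dimcorollary} forces $H$ to be semisimple, a contradiction.

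The remaining case $B \cong T_p$ is the main obstacle. Dualizing gives $T_p \cong T_p^* \hookrightarrow H$ as a Hopf subalgebra of dimension $p^2$, normal by Proposition \ref{2dimensional}(b), with $H/HT_p^+ \cong \KK[\ZZ_2]$. Let $g_0 \in G(T_p)$ be the order-$p$ generator. A direct verification shows that every Hopf algebra automorphism of $T_p$ fixes $g_0$: if $\phi(g_0) = g_0^k$ with $k \neq 1$, then $\phi(x)$ would be a $(g_0^k, 1)$-skew primitive, forced to be a scalar multiple of $g_0^k - 1$, but $(g_0^k - 1)^p \neq 0$ in $T_p$ contradicts $\phi(x)^p = 0$. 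Consequently, for a lift $\tilde y \in H$ of the $\KK[\ZZ_2]$-generator, the conjugation action gives $\tilde y g_0 = g_0 \tilde y$. Using that $H = T_p \oplus T_p \tilde y$ is free as a left $T_p$-module, one shows that a left integral of $H$ has the form $\Lambda_H = \Lambda_{T_p}(1 + \tilde y)$ up to scalar; then $\Lambda_H g_0 = \Lambda_{T_p} g_0 (1 + \tilde y) = \omega \Lambda_H$ yields $\alpha(g_0) = \omega$, a primitive $p$-th root of unity. Since $\ord(\alpha) = 2$ and $\ord(g_0) = p$ are coprime, $\alpha(g_0) \in \mu_2 \cap \mu_p = \{1\}$, contradicting $\omega \neq 1$.
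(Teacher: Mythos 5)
Your argument agrees with the paper's up to the point where you establish $S^{2p}=\id$ on $\{h\mid h\leftharpoonup\alpha=h\}$ and then that $\alpha\rightharpoonup(-)$ acts as $-\id$ on the complementary eigenspace; from there the paper takes a completely different route (it invokes \cite[Corollary 3.2]{Ng04} to see that $H_-=\{x\mid S^{2p}(x)=-x\}$ has even dimension, shows that $(x,y)=\lambda(xy)$ is a non-degenerate alternating form on the $+1$-eigenspace of $S^2|_{V_1}$, and derives the parity contradiction ``$p^2$ is even''), whereas you pass to centrality of $\alpha$ in $H^*$ and a normal Hopf subalgebra $T_p\subseteq H$. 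That reduction is essentially sound, with one small repair: two involutions sharing a $+1$-eigenspace need not coincide in general, so you must also note that $\alpha\rightharpoonup(-)$ and $(-)\leftharpoonup\alpha$ commute (immediate from coassociativity); with that, the conclusion that $\alpha$ is central does follow, and the elimination of the case $B$ semisimple via Corollary \ref{2dimcorollary} is correct.

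The genuine gap is in your final paragraph. The step $\tilde y g_0=g_0\tilde y$ is justified by the fact that every \emph{Hopf algebra} automorphism of $T_p$ fixes $g_0$, but conjugation $a\mapsto\tilde y a\tilde y^{-1}$ by a lift of the $\KK[\ZZ_2]$-generator is only an \emph{algebra} automorphism of $T_p$ preserving $\epsilon$ (this is all the cleft-extension data gives, and all that Proposition \ref{p:es} uses), since $\tilde y$ is not group-like and conjugation by a non-group-like element is not a coalgebra map. An $\epsilon$-preserving algebra automorphism of $T_p$ need not fix $g_0$ --- it only has to permute the primitive idempotents of $T_p/J\cong\KK[\ZZ_p]$ fixing the augmentation idempotent --- so your classification of Hopf automorphisms does not apply, and without $\tilde y g_0\tilde y^{-1}=g_0$ the computation $\Lambda_H g_0=\omega\Lambda_H$ collapses. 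In addition, the formula $\Lambda_H=\Lambda_{T_p}(1+\tilde y)$ is asserted with ``one shows'' but depends on unverified normalizations of $\tilde y$ (that $\tilde y^2=1$, $\epsilon(\tilde y)=1$, and that conjugation by $\tilde y$ fixes the line $\KK\Lambda_{T_p}$ with scalar $1$). To complete your route you would need either a group-like lift of the $\ZZ_2$-generator, which is not available in general, or a different contradiction in the case that $T_p$ is normal in $H$ with quotient $\KK[\ZZ_2]$; as it stands, the case $B\cong T_p$ remains open.
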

\begin{proof}
  Suppose $\ord(g)=p$ and $\ord(\a)=2$. Then by Radford's formula \eqref{radfordeq}, $S^{8p}=\id_H$. Since $p$ is an odd prime, we find $\a(g)=1$. From Lemma \ref{tracelemma} we get
 $$
 \dim H_{ij} = \frac{\dim H}{\ord(g)\cdot \ord(\a)} = p, \quad \Tr(S^2|_{H_{ij}})=0
 $$
 for all $i \in \ZZ_p$ and $j \in \ZZ_2$, and $H$ is a direct sum of the subspaces $H_{ij}$ defined in \eqref{eq:Hij}. By Lemma \ref{linearlemma}(c) it follows that $(S^2|_{H_{ij}})^p = \zeta_{ij} \id_{H_{ij}}$ for some $4$-th root of unity $\zeta_{ij} \in \KK$. Since $e_i \leftharpoonup f_0=e_i$, and $S^2(e_i)=e_i$, $\zeta_{i0}=1$ for all $i\in \ZZ_p$. Let $V_j = \bigoplus_i He_i \leftharpoonup f_j = H \leftharpoonup f_j$ for $j\in \ZZ_2$. Then $S^{2p}|_{V_0} = \id_{V_0}$. Thus, for $x \in V_0$ we have
 $$
 x = S^{4p}(x) = \a \rightharpoonup x \leftharpoonup\a = \a  \rightharpoonup x\,.
 $$
 Let $L(\a)$ denote the linear operator on $H$ defined by $L(\a)(x) =\a  \rightharpoonup x$. Then $L(\a)^2=\id_H$, $\Tr(L(\a))=0$ and $\Tr(L(\a)|_{V_{0}})=p^2$. Since $L(\a)(V_{1}) \subseteq V_{1}$, $L(\a)|_{V_{1}}=-\id_{V_{1}}$. Therefore, $\a \rightharpoonup x \leftharpoonup\a =x$ for all $x \in V_1$, and hence
 $S^{4p}=\id_H$. In particular,  $H$ is of index $p$. It follows by \cite[Corollary 3.2]{Ng04} that the subspace
 $$
 H_- =\{x \in H \mid S^{2p}(x)=-x\}
 $$
 is of even dimension. Since $S^{2p}|_{V_0}=\id_{V_0}$, $H_- \subseteq V_1$.

 Let $V_{j+}, V_{j-}$ denote  the eigenspaces of $S^2|_{V_j}$ associated to the eigenvalues $1, -1$ respectively. Since the $p$ distinct eigenvalues of $S^2|_{H_{i1}}$ are $\zeta_{i1}, \zeta_{i1}\w, \dots, \zeta_{i1}\w^{p-1}$, where $\w \in \KK$ is a primitive $p$-th root of unity, we find
 $$
 \dim \{x \in V_1\mid S^{2p}(x)=x\}=p \dim V_{1+}, \quad \dim H_- = p\dim V_{1-}\,.
 $$

 Let $\lambda \in H^*$ be a non-zero right integral. We claim that $(x,y)=\lambda(xy)$ defines a non-degenerate alternating form on $V_{1+}$. It follows by \cite{radfordtrace} that
 $$
 (x,y)=\lambda(xy) = \lambda(S^2 (y \leftharpoonup \a)x) = -\lambda(yx)=-(y,x)
 $$
 for all $x,y \in V_{1+}$. Moreover,
 $$
 \lambda(S^2(v))=\a(g)\lambda(v) = \lambda(v)= \lambda(\a \rightharpoonup v) \quad \text{for all $v \in H$}.
  $$
  Therefore, $\lambda(v)=0$ for any $v \in H$ which satisfies $S^2(v)=\mu v$ or $\a \rightharpoonup v=\mu v$ for some  $\mu \in \KK$ not equal to 1. Let $x \in V_{1+}$ such that $\lambda(xy)=0$ for all $y \in V_{1+}$. Then $\lambda(xy')=0$ for all $y' \in H$. By the non-degeneracy of $\lambda$ on $H$, $x=0$ and hence $(\cdot, \cdot)$ is non-degenerate on $V_{1+}$. Thus, $\dim V_{1+}$ is even.
  Note that $p\dim V_{1+}+\dim H_-=\dim V_1=p^2$.  This implies $p^2$
  is even, a contradiction.
\end{proof}

We now proceed to show that $S^{2p} = \id_H$ for the remaining possibilities of $\lcm(\ord(g),\ord(\alpha))$. We will use this to deduce in the next section that $H$ or $H^*$ is pointed.

\begin{lemma}\label{2pcase} If $\lcm(\ord(g), \ord(\a))=2p$, then $S^{2p} = \id_H$.
\end{lemma}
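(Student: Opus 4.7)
The plan is to leverage the decomposition $H=\bigoplus_{i\in\ZZ_{\ord(g)}}He_i$ together with the trace-zero data already furnished by Lemma \ref{tracelemma} and the scalar-action conclusion in Lemma \ref{linearlemma}(c).

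By the two preceding lemmas and the standing convention $\ord(g)\ge\ord(\a)$, the hypothesis $\lcm(\ord(g),\ord(\a))=2p$ forces $\ord(g)=2p$. Each summand $He_i$ therefore has dimension $p$ and satisfies $\Tr(S^2|_{He_i})=0$, as already recorded at the start of this section. Radford's formula together with \eqref{eq:ordS^2} gives $\ord(S^2)\mid 4p=2^{2}\cdot p$, so each restriction $S^2|_{He_i}$ falls into the setting of Lemma \ref{linearlemma}(c) with $n=2$, producing
$$
(S^2|_{He_i})^p = \xi_i \id_{He_i}
$$
for some fourth root of unity $\xi_i\in\KK$.

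To pin down $\xi_i$, I would exhibit a single nonzero $S^2$-fixed vector inside $He_i$. The idempotent $e_i$ itself works: $e_i\in He_i$ since $e_i\cdot e_i=e_i$, and $S^2$ fixes every element of $\KK[g]$ because it fixes the group-like $g$. Evaluating $(S^2)^p$ at $e_i$ then yields $\xi_i e_i = e_i$, so $\xi_i=1$ for every $i$. Summing over $i\in\ZZ_{2p}$ gives $S^{2p}=\id_H$.

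The argument is short, and the main point that needs care is the exponent supplied to Lemma \ref{linearlemma}(c): one really needs $\ord(S^2)\mid 2^{2}p$ rather than some larger $2^{n}p$, so that $\xi_i$ is confined to the fourth roots of unity and a single $S^2$-fixed vector $e_i$ is enough to force $\xi_i=1$. This is precisely what the hypothesis $\lcm(\ord(g),\ord(\a))=2p$ guarantees through \eqref{eq:ordS^2}; everything else is an immediate consequence of results that have already been assembled.
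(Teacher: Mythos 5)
Your proof is correct and follows essentially the same route as the paper: force $\ord(g)=2p$, decompose $H=\bigoplus_i He_i$ into $p$-dimensional trace-zero pieces, apply Lemma \ref{linearlemma}(c) to get $(S^2|_{He_i})^p=\xi_i\,\id$, and use the fixed vector $e_i$ to conclude $\xi_i=1$. (Your closing caveat is unnecessary: the evaluation at $e_i$ pins down $\xi_i=1$ regardless of which root-of-unity group $\xi_i$ lies in.)
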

\begin{proof} Under this hypothesis,  $S^{8p}=\id_H$ by Radford's formula \eqref{radfordeq}. Moreover,
either  $\ord(g)=2p$ or  $(\ord(g), \ord(\a))=(p,2)$. The second case can be eliminated by the preceding lemma. Therefore, $\ord(g) = 2p$, and $\dim H e_i = p$ for $i \in \ZZ_{2p}$.  Fix $i \in \ZZ_{2p}$ and set $T_i = S^2 |_{H e_i}$.  Then $T_i^{4p}=\id_{He_i}$ and $\Tr(T_i)=0$. By Lemma \ref{linearlemma}(c), we have $T^p_i = \zeta_i \id_{He_i}$ where $\zeta_i \in \KK$ is a 4-th root of unity. Note that
$$
T_i(e_i) = S^2(e_i) = e_i\,.
$$
Therefore, $\zeta_i =1$ and so $T^p_i=\id_{He_i}$ for all $i \in \ZZ_{2p}$. Hence, we get that
$S^{2p} = \id_H$ as desired.
\end{proof}

We now deal with the remaining case in the following lemma.
\begin{lemma}
  If $\lcm(\ord(g), \ord(\a))=p$, then $S^{2p}=\id_H$.
\end{lemma}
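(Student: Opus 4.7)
Since $\lcm(\ord(g), \ord(\alpha)) = p$ and $\ord(g) \ge \ord(\alpha)$, we have $\ord(g) = p$ and $\ord(\alpha) \in \{1, p\}$; Radford's formula \eqref{radfordeq} gives $S^{4p} = \id_H$. Applying Lemma \ref{tracelemma} with $(a, \beta) = (g, \epsilon)$ (noting $\epsilon(g) = 1$) yields $H = \bigoplus_{i \in \ZZ_p} He_i$ with $\dim He_i = 2p$ and $\Tr(S^2|_{He_i}) = 0$. Setting $T_i := S^2|_{He_i}$, we have $T_i^{2p} = \id_{He_i}$ and $\Tr T_i = 0$; Lemma \ref{linearlemma}(a) then gives $\Tr(T_i^p) = p d_i$ for some $d_i \in \ZZ$. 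Since $T_i^p$ is an involution whose $\pm 1$-eigenspace dimensions $n_\pm$ satisfy $n_+ + n_- = 2p$ and $n_+ - n_- = p d_i$, we must have $d_i \in \{-2, 0, 2\}$; and the identity $T_i(e_i) = e_i$ (as $S^2$ fixes $g$, hence $e_i \in \KK[g]$) forces $n_+ \ge 1$, so $d_i \ne -2$. Thus $d_i \in \{0, 2\}$, and the lemma reduces to showing $d_i = 2$ for every $i \in \ZZ_p$.

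The plan to establish $d_i = 2$ is to refine the analysis via the $(\KK[g], \KK[g])$-bimodule decomposition $He_i = \bigoplus_{j \in \ZZ_p} e_j H e_i$. Since $S^2(g) = g$, the operator $T_i$ preserves each piece $e_j H e_i$; moreover, as a consequence of Radford's formula, $T_i^2$ factors as the product of conjugations $c_g \circ c_\alpha$, where $c_g(h) = g h g^{-1}$ acts by the scalar $\omega^{j-i}$ on $e_j H e_i$ and $c_\alpha(h) = \alpha \rightharpoonup h \leftharpoonup \alpha^{-1}$ (trivial in the unimodular subcase $\ord(\alpha) = 1$) can be handled on a further refinement. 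Consequently the eigenvalues of $T_i$ on $e_j H e_i$ lie in $\{\pm \zeta^{j-i}\}$ for a fixed primitive $2p$-th root of unity $\zeta$ with $\zeta^2 = \omega$. Letting $u_{ji}, v_{ji}$ be the respective multiplicities, the vanishing $\Tr T_i = \sum_j (u_{ji} - v_{ji}) \zeta^{j-i} = 0$ combined with the unique $\QQ$-linear relation $\sum_{k=0}^{p-1}(-1)^k \zeta^k = 0$ among $\{\zeta^0, \ldots, \zeta^{p-1}\}$ (coming from $\Phi_{2p}(\zeta) = 0$) forces $u_{ji} - v_{ji} = \varepsilon_i (-1)^j$ for some integer $\varepsilon_i$; a direct computation then gives $d_i = \varepsilon_i (-1)^i$.

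The main obstacle is pinning down $\varepsilon_i = 2(-1)^i$, which would give $d_i = 2$. I would leverage the fixed vector $e_i \in e_i H e_i$ (forcing $u_{ii} \ge 1$), the non-negativity $u_{ji}, v_{ji} \ge 0$, and information on the bimodule dimensions $m_{ji} = \dim e_j H e_i$. As a backup, one can argue by contradiction: if $d_i = 0$ for some $i$, then $S^{2p}$ is a nontrivial involutive Hopf algebra automorphism of $H$, and a structural analysis of the resulting eigenspace decomposition—combined with Corollary \ref{2dimcorollary} (to rule out semisimple Hopf subalgebras of dimension $p^2$ inside $H$), Proposition \ref{p:es} applied to the Jacobson radical in the case a Taft algebra arises, and Proposition \ref{groupproposition} together with the classifications of Hopf algebras of dimensions $p$, $2p$, and $p^2$—should yield a contradiction with the non-semisimplicity of $H$. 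Once $d_i = 2$ is established for every $i \in \ZZ_p$, we conclude $T_i^p = \id_{He_i}$ for every $i$ and hence $S^{2p} = \id_H$.
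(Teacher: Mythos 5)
Your first paragraph is sound and correctly reduces the lemma to excluding $d_i=0$: the blockwise decomposition $H=\bigoplus_i He_i$, the vanishing $\Tr(S^2|_{He_i})=0$, Lemma \ref{linearlemma}(a), and the fixed vector $e_i$ do give $d_i\in\{0,2\}$. But the proof stops there in any rigorous sense: everything after that is a declared ``plan'' and a ``backup,'' and the one step that carries all the difficulty --- ruling out $d_i=0$ --- is never actually performed. Note that your blockwise information is genuinely too weak to finish: it only gives $\Tr(S^{2p})=p\sum_i d_i$ with each $d_i\in\{0,2\}$, and nothing you have established prevents, say, exactly two of the $d_i$ from vanishing. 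The refinement via $e_jHe_i$ also has an unaddressed problem in the case $\ord(\a)=p$: Radford's formula gives $S^4=c_g\circ c_\a$, and $c_\a$ does not act by a scalar on $e_jHe_i$, so your claim that the eigenvalues of $T_i$ on $e_jHe_i$ are $\pm\zeta^{j-i}$ is unjustified precisely in the harder subcase. Your ``backup'' contradiction argument from a nontrivial involutive automorphism is pure speculation with no identified mechanism.

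The paper closes the gap with two inputs you are missing. First, a parity constraint: since $\ord(g)=p$, the Hopf algebra $H$ has index $p$, and \cite[Proposition 1.3]{Ng04} forces $\dim H_-=\dim\{x\in H\mid S^{2p}(x)=-x\}$ to be \emph{even}; combined with $\dim H_++\dim H_-=2p^2$ this kills the possibility $\Tr(S^{2p})=0$, which would force $\dim H_-=p^2$, odd. Second, a divisibility statement stronger than yours: $\Tr(S^{2p})=p^2d$ for a \emph{single} integer $d$ (not merely $p\mid\Tr$ blockwise). When $\ord(\a)=1$ this is \cite[Lemma 4.3]{Ng02} for unimodular Hopf algebras of index $p$; when $\ord(\a)=p$ the paper first shows $\a(g)\ne1$ (by the trace argument on the two-dimensional pieces $H_{ij}$ plus the same parity constraint), uses this to produce a Hopf algebra projection $\nu:H\to\KK[g]$ splitting the inclusion, realizes $H$ as a biproduct $R\times\KK[g]$ via Radford's projection theorem, and applies Lemma \ref{linearlemma}(a) to $S^2|_R$. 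With $\Tr(S^{2p})=p^2d$, $d$ even, $|d|\le2$, $d\ne-2$, and $d\ne0$ by parity, one gets $d=2$ and hence $S^{2p}=\id_H$. Without the parity input and the $p^2$-divisibility (or some equivalent), your outline cannot be completed.
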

\begin{proof} The proof will be presented in steps (i)-(v).\smallskip\\
(i) Since $\lcm(\ord(g), \ord(\a))=p$,  $S^{4p} =\id_H$ by Radford's formula \eqref{radfordeq}. In particular, the possible eigenvalues of $S^{2p}$ are $\pm 1$. Define
$$
H_{+} = \{ x \in H \mid S^{2p}(x) = x \}, \quad H_{-} = \{ x \in H \mid S^{2p}(x) = -x\}\,.
$$
Then we have
\begin{equation}\label{eq:H+-}
\dim H_+ +\dim H_- = 2p^2\quad \text{and}\quad \Tr(S^{2p}) = \dim H_+ -\dim H_-\,.
\end{equation}
(ii) The pair $(\ord(g), \ord(\a))$ can either be $(p,1)$ or $(p,p)$.  In both cases, $\ord(g)=p$. Hence
$\lcm(\ord(g), \ord(S^4))=p$, or equivalently, $H$ is of index $p$. It follows by \cite[Proposition 1.3]{Ng04} that
$\dim H_-$ is an even integer.\smallskip\\
 (iii) \emph{If $(\ord(g), \ord(\a))=(p,p)$, then  $\a(g)\ne 1$.} Otherwise, we can apply Lemma \ref{tracelemma} and get
 $$
\dim H_{ij} = 2, \quad \Tr(S^2|_{H_{ij}})=0 \quad \text{and}\quad
H=\bigoplus_{i,j \in \ZZ_p} H_{ij}\,,
$$
where $H_{ij}$ are defined in \eqref{eq:Hij}. By (i), we find $\left(S^2|_{H_{ij}}\right)^{2p}=\id_{H_{ij}}$, and so $S^2|_{H_{ij}}$ has exactly two distinct eigenvalues $\pm \xi$ for some $p$-th root of unity $\xi \in \KK$. Therefore, $\pm 1$ are the two distinct eigenvalues of $S^{2p}|_{H_{ij}}$. This implies
$\dim H_- =\dim H_+=p^2$ which contradicts (ii). \smallskip\\
(iv) \emph{We claim that $\Tr(S^{2p}) = p^2d$ for some integer $d$.}
 If $(\ord(g), \ord(\a))=(p,1)$, then  $H$ is a unimodular Hopf algebra of index $p$ by (ii). It follows immediately by \cite[Lemma 4.3]{Ng02} that $\trace\left( S^{2p} \right) = p^2 d$ for some integer $d$.
We may now assume $(\ord(g), \ord(\a))=(p,p)$.
Let $B = \KK[g]$ and $I = \KK[\alpha]^{\perp}$, which is a Hopf ideal of $H$, and  $\overline H = H/I \cong \KK[\alpha]^* \cong B$.  Let $\pi: H \to B$ be the natural surjection of Hopf algebras. By (iii), $\a(g) \ne 1$ and so $g-1 \not\in \KK[\a]^\perp$, or equivalently $\pi(g) \ne 1_{\overline H}$. Since $\dim \overline H =p$,  $\pi(B) = \overline  H$. Therefore, the composition $\pi \circ i : B \to \overline H$ is an isomorphism of Hopf algebra, where $i: B \to H$ is inclusion. Thus, there exists a surjective Hopf algebra map $\nu: H \to B$ such that $\nu \circ i =\id_B$. Hence, by  \cite{radfordproj}, $H$ is isomorphic to the biproduct $R \times B$ where $R$ is the right coinvariant given by
$$
R = H^{\text{co} \: \nu} = \{h \in H \mid (\id_H \otimes \nu)\Delta(h) = h \otimes 1_B\}\,.
$$
Due to the results in \cite[Section 4]{AS98}, $R$ is invariant under $S^2$ and $S^2 = T \otimes S^2 |_B$
where $T = S^2 |_R$ if one identifies $H$ with $R \times  B$.  Since $S^2|_B=\id_B$ and
$$ 0 = \trace(S^2) = \trace(T) \trace(\id_B) = \trace(T)p\,,$$
it follows that
$\trace(T) = 0$.  Also we have that $T^{2p} = \id_R$ as $S^{4p} = \id_H$.  Thus by Lemma \ref{linearlemma}(b), $\trace(T^p) = pd$ for some integer $d$.  Therefore, $$
\trace(S^{2p}) = \trace(T^p)\trace(\id_B) = p^2 d, \quad\text{as claimed.}
$$
(v) The equalities in \eqref{eq:H+-} imply that $d$ is an even integer.  Since $$-2p^2 \leq \trace\left( S^{2p} \right) \leq 2p^2,$$ it follows that $d$ can only be $-2, 0,$ or $2$. Note that if $d = -2$ then $S^{2p} = -\id_H$, which is not possible as $S^{2p}(1_H) = 1_H$.  If $d = 0$ then $\trace\left( S^{2p} \right) = 0$ and hence $\dim H_+ = \dim H_- = p^2$.  But this contradicts  (ii) which asserts that $\dim H_{-}$ is even.  Hence $d = 2$ and so $\trace\left( S^{2p} \right) = 2p^2$ which implies that $S^{2p} = \id_H$, as desired.
\end{proof}

With the beginning remarks and these four lemmas, we have proved the following theorem.
\begin{theorem}\label{t:3}
  Let $p$ be an odd prime, and $H$ a non-semisimple Hopf algebra over $\KK$ of dimension $2p^2$. Then the order of the antipode of $H$ is $2p$.
\end{theorem}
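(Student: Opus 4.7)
The plan is to combine the four preceding lemmas into a single case analysis on $L := \lcm(\ord(g),\ord(\alpha))$ for the distinguished group-likes $g \in G(H)$ and $\alpha \in G(H^*)$, and then to upgrade the resulting relation $S^{2p} = \id_H$ to the sharp equality $\ord(S) = 2p$.

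First, by \cite[Corollary 2.2]{Ng05} at least one of $g$, $\alpha$ is non-trivial since $4 \nmid 2p^2$; with the WLOG convention $\ord(g) \geq \ord(\alpha)$, the Nichols--Zoeller theorem forces $L \in \{2, p, 2p, p^2, 2p^2\}$. The first lemma of the section eliminates $L \in \{2, p^2, 2p^2\}$ (the large cases by Corollary \ref{2dimcorollary} applied to the semisimple Hopf subalgebra $\KK[g]$; the case $L=2$ by a parity argument on $\dim He_i = p^2$ via Lemmas \ref{tracelemma} and \ref{linearlemma}(b)); the second lemma eliminates $(\ord(g), \ord(\alpha)) = (p, 2)$; Lemma \ref{2pcase} handles $L = 2p$; and the final lemma handles $L = p$. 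Together they yield $S^{2p} = \id_H$, so $\ord(S) \mid 2p$.

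To conclude $\ord(S) = 2p$, I would rule out the three proper divisors $1, 2, p$ of $2p$. The cases $\ord(S) \in \{1, 2\}$ are both excluded by Theorem \ref{semisimple}, which equates $S^2 = \id_H$ with semisimplicity of $H$. For the remaining value $p$, suppose toward a contradiction that $S^p = \id_H$ with $p$ odd. Choosing integers $a, b$ with $2a + pb = 1$, we get $S = S^{2a} = (S^2)^a$, so $S$ is a power of $S^2$. Since $S^2$ is simultaneously an algebra and a coalgebra automorphism of $H$, the same holds for $S$. Combining $S(ab) = S(a)S(b)$ (as an algebra map) with the antipode axiom $S(ab) = S(b)S(a)$ yields $[S(a), S(b)] = 0$ for all $a, b \in H$, and bijectivity of $S$ then forces $H$ to be commutative. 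The dual argument forces $H$ to be cocommutative, and over the algebraically closed field $\KK$ of characteristic zero this makes $H$ a group algebra of a finite abelian group, hence semisimple -- contradicting the hypothesis. Therefore $\ord(S) = 2p$.

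The main obstacle in the overall argument is not the assembly above but the two subtle lemmas where trace vanishing alone is insufficient: ruling out $(\ord(g),\ord(\alpha)) = (p,2)$ and treating $L = p$. In both situations, Lemma \ref{linearlemma}(c) together with Lemma \ref{tracelemma} shows only that $(S^2)^p$ is scalar on each block $H_{ij}$, with the scalar a priori any fourth root of unity; pinning it down to $1$ requires either a non-degenerate alternating pairing built from Radford's integral-trace formula (for the $(p,2)$ case) or the Radford biproduct decomposition $H \cong R \times \KK[g]$ arising from the retraction forced by $\alpha(g) \ne 1$ (for $L = p$, subcase $(p,p)$), combined with the parity of $\dim H_{-}$ coming from the index-$p$ structure.
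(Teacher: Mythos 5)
Your proposal is correct and follows the paper's route exactly: the theorem is assembled from the four lemmas of Section 2 (elimination of $\lcm(\ord(g),\ord(\alpha))\in\{2,p^2,2p^2\}$ and of the pair $(p,2)$, then $S^{2p}=\id_H$ in the cases $\lcm=2p$ and $\lcm=p$), and your summary of where the real work lies in the $(p,2)$ and $(p,p)$ subcases is accurate. Your final step ruling out $\ord(S)\in\{1,2,p\}$ -- via Theorem \ref{semisimple} and the standard observation that an odd-order antipode is a power of $S^2$, forcing $H$ to be commutative and cocommutative, hence semisimple -- is a correct and welcome explicit justification of the passage from $S^{2p}=\id_H$ to $\ord(S)=2p$, which the paper leaves implicit.
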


\section{Non-semisimple Hopf algebras of dimension $2p^2$}
In this section, we will show that if $H$ is a non-semisimple Hopf algebra of dimension $2p^2$ over $\KK$, for $p$ an odd prime, then $H$ or $H^*$ is pointed.  This completes the proof of our main result Theorem \ref{t:main}.  Recall that a Hopf algebra $H$ over $\KK$ is \emph{pointed} if its simple subcoalgebras are all 1-dimensional. Therefore, $H^*$ is pointed if all simple $H$-modules are 1-dimensional.

We continue to assume that $H$ is a non-semisimple Hopf algebra of dimension $2p^2$ over the field $\KK$ with the antipode $S$, where $p$ is an odd prime. Again, we let $g \in G(H)$ and $\a \in G(H^*)$ denote the distinguished group-like elements.  From Section 2, we know that the order of the antipode $S$ of $H$ is $2p$, and that
$$
\lcm(\ord(g), \ord(\a))=p \text{ or } 2p\,.
$$
\textbf{By duality, we will assume $\ord(\a) \ge \ord(g)$ in this section}. Therefore, $\ord(\a)=p$ or $2p$. Note that the opposite assumption was made in Section 2.

For any finite-dimensional $H$-module $V$, the left dual $V\du$ of $V$ is the $H$-module with the underlying space $V^*=\Hom_\KK(V, \KK)$ and the $H$-action given by
$$
(h\cdot f)(v) = f(S(h)v)\quad\text{for all }f \in V^* \text{ and } v \in V.
$$
For an algebra automorphism $\sigma$ on $H$, let $\ld{\sigma}V$ be the $H$-module with the underlying space $V$ and the action given by $h \cdot_{\sigma} v = \sigma(h)v$ for all $h \in H$ and $v \in V$.  It is easy to verify that the natural isomorphism $j: V \to V^{**}$ of vector spaces is also an $H$-module map from $\ld{S^2}V$ to $V\bidu$. Hence,
\begin{equation}
  \ld {S^2} V \stackrel{j}{\cong} V\bidu\quad \text{for } V \in \C{H}\,.
\end{equation}

Let  $P(V)$ denote the projective cover of $V \in \C{H}$. For $\beta \in G(H^*)$, we define $\KK_{\beta}$ as the 1-dimensional $H$-module which affords the irreducible character $\beta$. We will simply denote by $\KK$ the trivial 1-dimensional $H$-module $\KK_\epsilon$.

Let us denote by $[V]$  the isomorphism class of a simple $H$-module $V$. The cyclic group $G=\langle \a \rangle$ acts on  the set $\irr(H)$ of all isomorphism classes of simple $H$-modules  by setting $$\beta [V] = [\KK_{\beta} \otimes V]$$ for all $\beta \in G$ and $[V] \in \irr(H)$.  We denote the $G$-orbit of $[V]$ in $\irr(H)$ by $O(V)$. Since $\KK_\beta \o -$ is a $\KK$-linear equivalence on $\C{H}$, we have  $P(\KK_{\beta} \otimes V) \cong \KK_{\beta} \otimes P(V)$ and thus $\dim P(W) = \dim P(V)$ for all $[W] \in O(V)$.  In particular, $\dim P(\KK) = \dim P(\KK_{\beta})$ for all $\beta \in G$. Suppose $\{[V_0], [V_1], \cdots, [V_{t}]\}$ is a complete set of representatives of $G$-orbits in $\irr(H)$ with $V_0 = \KK$. The left $H$-module $H$ has the decomposition of principal modules:
\begin{equation} \label{eq:decomp}
\begin{aligned}
H \cong  \bigoplus_{[V]\in \irr(H)} & (\dim V)\cdot P(V) \\
\cong & \bigoplus_{\b \in \langle \a \rangle} P(\KK_\b) \oplus \bigoplus_{i=1}^t \dim V_i \left(\bigoplus_{[W] \in O(V_i)} P(W)\right)\,.
\end{aligned}
\end{equation}
This decomposition implies the equation:
\begin{equation}\label{dimcount}
	 \dim H  = \ord(\a)\dim P(\KK)+\sum_{i >0 } |O(V_i)|\dim V_i \cdot \dim P(V_i)\,.
\end{equation}
 We can now demonstrate that $\dim P(\KK)$ can only take  two possible values.
\begin{lemma}\label{l:dimP}
The value of $\dim P(\KK)$ can either be  $p$ or $2p$.
\end{lemma}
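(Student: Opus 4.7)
The plan is to extract the conclusion from the dimension identity \eqref{dimcount}, using non-semisimplicity of $H$ and the values $\ord(\alpha) \in \{p, 2p\}$ established in Section 2. Since $P(\KK) \neq \KK$, we have $d_0 := \dim P(\KK) \geq 2$. From \eqref{dimcount} with all right-hand terms non-negative we get $\ord(\alpha) d_0 \leq 2p^2$, so $d_0 \leq 2p$ in general, and $d_0 \leq p$ when $\ord(\alpha) = 2p$. Thus $d_0$ lies in an explicit integer interval, and the task is to rule out values outside $\{p, 2p\}$.

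The next step refines \eqref{dimcount} by grouping all one-dimensional simples. For each $\beta \in G(H^*)$, tensoring by $\KK_\beta$ is a $\KK$-linear auto-equivalence of $\C{H}$ that preserves projectives and satisfies $\KK_\beta \otimes P(\KK) \cong P(\KK_\beta)$; hence $\dim P(\KK_\beta) = d_0$ for every $\beta \in G(H^*)$. The total contribution of the one-dimensional simples to \eqref{dimcount} is therefore $|G(H^*)| d_0$, giving
\begin{equation*}
2p^2 = |G(H^*)| d_0 + N', \qquad N' \geq 0,
\end{equation*}
where $N'$ counts higher-dimensional simples. By the Nichols--Zoeller theorem applied to the chain of Hopf subalgebras $\KK[\alpha] \subseteq \KK[G(H^*)] \subseteq H^*$, we have $\ord(\alpha) \mid |G(H^*)|$ and $|G(H^*)| \mid 2p^2$. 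Moreover, $|G(H^*)| < 2p^2$ (otherwise $H^* \cong \KK[G(H^*)]$ would be semisimple, contradicting non-semisimplicity of $H$ via Theorem \ref{semisimple}). This leaves $|G(H^*)| \in \{p, 2p, p^2\}$ when $\ord(\alpha) = p$, and $|G(H^*)| = 2p$ when $\ord(\alpha) = 2p$.

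The main and most delicate step is to rule out every value of $d_0$ outside $\{p, 2p\}$ that is arithmetically compatible with the equation. The most problematic possibility is $|G(H^*)| = p^2$, which forces $d_0 = 2$ and $N' = 0$: all simples would then be one-dimensional and $\mathrm{rad}(H)^2 = 0$. I would exclude this case by showing that $\mathrm{rad}(H)$ is a Hopf ideal in this situation and then applying Proposition \ref{p:es} together with Corollary \ref{2dimcorollary} to force the semisimplicity of $H$, a contradiction. For the remaining cases I would exploit $S^{2p} = \id_H$ (Theorem \ref{t:3}): since $S^2$ is a Hopf algebra automorphism of order dividing $p$, it permutes $\irr(H)$ with orbits of size $1$ or $p$, so contributions from the size-$p$ orbits to \eqref{dimcount} are divisible by $p$. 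Combined with Lemma \ref{linearlemma}(b) applied to $S^2$ on a suitable $S^2$-invariant subspace of $H$ (where the trace vanishes by Lemma \ref{tracelemma}, chosen so that the subspace has dimension $d_0$), this should yield the divisibility $p \mid d_0$, which together with the bounds gives $d_0 \in \{p, 2p\}$.

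The main obstacle is the careful bookkeeping in the final divisibility argument, namely identifying the correct $S^2$-invariant subspace of dimension $d_0$ on which both $\Tr(S^2) = 0$ and $S^{2p} = \id$ hold, and rigorously excluding the $|G(H^*)| = p^2$ case. Both of these require combining the trace vanishing of Lemma \ref{tracelemma} with structural results from Section 1 (Proposition \ref{groupproposition}, Corollary \ref{2dimcorollary}, Proposition \ref{p:es}).
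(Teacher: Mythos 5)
Your outer frame is right: the lemma reduces to a lower bound $\dim P(\KK)\ge 2$, the upper bound $\ord(\a)\dim P(\KK)\le 2p^2$ from \eqref{dimcount}, and the divisibility $p\mid \dim P(\KK)$. Since $\ord(\a)\ge p$ in all surviving cases, these three facts alone give $\dim P(\KK)\in\{p,2p\}$, so the entire case analysis over $|G(H^*)|$ (including the $|G(H^*)|=p^2$ subcase, which in any event dies immediately by Corollary \ref{2dimcorollary} applied to $H^*$) is dispensable. The genuine gap is in the only step that matters, the divisibility $p\mid\dim P(\KK)$, and the mechanism you propose for it does not work. Lemma \ref{tracelemma} only produces the subspaces $He_{a,i}\leftharpoonup e_{\b,j}$, whose dimensions are forced to be $\dim H/(\ord(a)\ord(\b))$; there is no choice of $a$ and $\b$ making this equal to $\dim P(\KK)$, and $P(\KK)$ itself is not a subspace of this form (indeed these subspaces are not even left ideals in general, so they cannot isolate a principal indecomposable summand). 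Likewise, the observation that $S^2$ permutes $\irr(H)$ in orbits of size $1$ or $p$ gives nothing here, because $[\KK]$ is a fixed point of that action, so its contribution $\ord(\a)\dim P(\KK)$ to \eqref{dimcount} is exactly the term you are trying to control.

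The paper obtains the divisibility by an entirely different device: from $\KK\cong\KK\bidu$ one gets $P(\KK)\cong P(\KK)\bidu\cong\ld{S^2}P(\KK)$, so $P(\KK)$ is an indecomposable module isomorphic to its twist by the order-$p$ automorphism $S^2$; Lemma \ref{l:A2} then produces an $H$-module isomorphism $\phi\colon P(\KK)\to\ld{S^2}P(\KK)$ with $\phi^p=\id_{P(\KK)}$, the external result \cite[Lemma 1.3]{Ng08} shows $\trace(\phi)=0$ (this is the crucial input, a statement about twisted self-maps of the projective cover of the trivial module over a non-semisimple Hopf algebra, not a consequence of Lemma \ref{tracelemma}), and Lemma \ref{linearlemma}(b) applied to $\phi$ itself --- not to $S^2$ on a subspace of $H$ --- yields $p\mid\dim P(\KK)$. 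Without this ingredient, or some replacement for it, your argument cannot exclude values such as $\dim P(\KK)=2$ with $|G(H^*)|=p$ and a large remainder $N'$, so the proof is incomplete as it stands.
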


\begin{proof} We have that $$P(\KK) \cong P(\KK^{\vee \vee}) \cong P(\KK)^{\vee \vee} \cong \:_{S^2}P(\KK)\,.$$ Since $S^{2p} = \id_H$, by Lemma \ref{l:A2} in the Appendix, there exists an $H$-module isomorphism $$\phi : P(\KK) \to \:_{S^2}P(\KK)$$ such that $\phi^p = \id_{P(\KK)}$.  It follows by  \cite[Lemma 1.3]{Ng08} that $\trace(\phi) = 0$. In view of \cite[Lemma 2.6]{AS98} or Lemma \ref{linearlemma}(b), $\dim P(\KK) = np$ for some positive integer $n$.  But then by equation (\ref{dimcount}), $$2p^2 = \dim H \geq p \dim P(\KK) = np^2$$ and so $n = 1$ or $2$.
\end{proof}

In view of Lemma \ref{l:dimP} and the beginning remark of this section, we find
\begin{equation}\label{eq:4cases}
(\ord(\a), \dim P(\KK))=(2p, 2p),\, (p,2p),\, (2p, p)\text{ or } (p,p)\,.
\end{equation}
The following lemma settles the first three cases.

\begin{lemma}\label{secondcase} The pair $(\ord(\a), \dim P(\KK))\ne (2p, 2p)$.
If $(\ord(\a), \dim P(\KK))$ is equal to either $(p, 2p)$ or $(2p,p)$, then $H^*$ is pointed.
\end{lemma}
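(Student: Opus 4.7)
The plan is to read off all three cases directly from the dimension count equation \eqref{dimcount}, namely
\[
2p^2 \;=\; \dim H \;=\; \ord(\alpha)\dim P(\KK)+\sum_{i>0}|O(V_i)|\,\dim V_i \cdot \dim P(V_i),
\]
combined with the non-negativity and integrality of each summand on the right. In particular, the entire statement should reduce to arithmetic once the restriction on $\dim P(\KK)$ from Lemma \ref{l:dimP} is invoked; I do not expect to need any further structure theory here.

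First I would dispose of the case $(\ord(\alpha),\dim P(\KK))=(2p,2p)$: the very first term of the right-hand side is then $4p^2$, which already exceeds $\dim H=2p^2$. Since every other summand is a non-negative integer, this contradicts \eqref{dimcount}, so this pair cannot occur.

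For the two remaining pairs $(\ord(\alpha),\dim P(\KK))=(p,2p)$ and $(2p,p)$ the first term of \eqref{dimcount} is exactly $2p^2=\dim H$, which forces
\[
\sum_{i>0}|O(V_i)|\,\dim V_i\cdot\dim P(V_i)=0.
\]
Each factor $|O(V_i)|$, $\dim V_i$, $\dim P(V_i)$ is a positive integer, so the only way for the sum to vanish is that there are no $G$-orbits of simple modules outside the orbit of the trivial module $\KK$. Consequently every simple $H$-module is isomorphic to some $\KK_\beta$ with $\beta\in\langle\alpha\rangle$, and in particular is one-dimensional. By the characterisation recalled at the start of the section, this is exactly the assertion that $H^*$ is pointed.

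I do not foresee a genuine obstacle: the work is purely a bookkeeping consequence of Lemma \ref{l:dimP} and \eqref{dimcount}. The only thing to be careful about is to state explicitly that the summation index $i$ ranges over representatives of the non-trivial $G$-orbits, so that vanishing of the sum really does force the absence of higher-dimensional simples rather than merely of their projective covers.
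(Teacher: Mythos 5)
Your proof is correct and follows essentially the same route as the paper: both arguments come down to the dimension count $\ord(\a)\dim P(\KK)\le \dim H=2p^2$ coming from the decomposition \eqref{eq:decomp}, with equality forcing $\irr(H)=O(\KK)$ and hence all simples to be $1$-dimensional. The paper phrases this via the submodule $H_\a=\bigoplus_{\b\in\langle\a\rangle}P(\KK_\b)$ of ${}_HH$ rather than directly via \eqref{dimcount}, but the content is identical.
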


\begin{proof} Let
$$
H_\a=\bigoplus_{\b \in \langle\a\rangle} P(\KK_{\b})\,.
$$
Then $H_\a$ is isomorphic to a left submodule of ${_HH}$, and so $\dim H_\a \le \dim H = 2p^2$. By the preceding remark,
$\dim H_\a = \ord(\a) \dim P(\KK)$. Therefore, it cannot be possible to have $(\ord(\a), \dim P(\KK))=(2p, 2p)$.
If  $(\ord(\a), \dim P(\KK))=(p, 2p)$ or $(2p,p)$, then $\dim H_\a=\dim H$ and hence $H \cong H_\a$ as left $H$-modules. Since all the simple quotients of $H_\a$ are 1-dimensional,
every simple $H$-module is 1-dimensional. Therefore, $H^*$ is pointed.
\end{proof}

Next we handle the remaining case that $\ord(\a)= \dim P(\KK) = p$. Following the terminology in \cite{EG03}, an $H$-module $V$ is called \textit{$\a$-stable} if $\KK_{\a} \otimes V \cong V$.  We have two subcases; either there exists an $\alpha$-stable simple $H$-module, or all simple $H$-modules are not $\a$-stable.

\begin{lemma}\label{thirdcase} If $\ord(\a) = \dim P(\KK) = p$, and all the simple $H$-modules  are $\a$-stable, then $H^*$ is pointed.
\end{lemma}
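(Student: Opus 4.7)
The plan is to force the identity
\[
\sum_{i\ge 1}\dim V_i\cdot \dim P(V_i)=p^2,
\]
which follows from \eqref{dimcount} once $|O(V_i)|=1$, $\ord(\a)=p$, and $\dim P(\KK)=p$ are substituted, to have no nonempty sum. Two preliminary observations make this possible: every $V_i$ with $i\ge 1$ satisfies $\dim V_i\ge 2$ (a one-dimensional $\KK_\gamma$ cannot be $\a$-stable without $\a=\epsilon$), and no such $V_i$ is simple projective (the split $H$-linear trace $V\o V\du\to\KK$ would otherwise make $\KK$ projective, forcing $H$ semisimple by Larson--Radford). I also record the Nakayama bound $\dim P(V_i)\ge 2\dim V_i$: $\a$-stability of $V_i$ together with $S^{2p}=\id_H$ and Radford's formula \eqref{radfordeq} implies that the Nakayama permutation fixes $V_i$, so $\soc P(V_i)\cong V_i$.

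The key claim is that $p$ divides both $\dim V_i$ and $\dim P(V_i)$. For $\dim P(V_i)$ the argument mirrors the proof of Lemma~\ref{l:dimP}: the isomorphism $P(V_i)\cong \KK_\a\o P(V_i)$ gives a $\KK$-linear automorphism $\Phi_i$ with $\Phi_i(hx)=(h\leftharpoonup\a)\Phi_i(x)$, Appendix Lemma~\ref{l:A2} normalises it so that $\Phi_i^p=\id$, and \cite[Lemma~1.3]{Ng08} applied to the indecomposable non-simple module $P(V_i)$ forces $\Tr(\Phi_i)=0$, whence $p\mid\dim P(V_i)$ by Lemma~\ref{linearlemma}(b).

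The analogous divisibility $p\mid\dim V_i$ is what I expect to be the main obstacle, since the trace-vanishing lemma is not available for a simple module. My plan is to view $V_i=\bigoplus_{j\in\ZZ_p}V_i^{(j)}$ as a $\ZZ_p$-graded module over the $\ZZ_p$-graded algebra $H=\bigoplus_{k\in\ZZ_p}H^{(k)}$ furnished by the $\leftharpoonup\a$-eigenspace decomposition (each $H^{(k)}$ of dimension $2p$ by a duality version of Nichols--Zoeller applied to the Hopf subalgebra $\KK[\a]\subseteq H^*$), using the compatibility $H^{(k)}V_i^{(j)}\subseteq V_i^{(j+k)}$. Simplicity of $V_i$ will force a dichotomy: either every $V_i^{(j)}$ has the same positive dimension, giving $p\mid\dim V_i$; or $\phi_i$ is a scalar and $\bigoplus_{k\ne 0}H^{(k)}\subseteq \operatorname{Ann}(V_i)$, yielding the codimension bound $(\dim V_i)^2\le 2p$. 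The scalar alternative will then be eliminated by combining this bound with the Nakayama inequality and $p\mid\dim P(V_i)$, together with the observation that any invertible element of some $H^{(k)}$ with $k\ne 0$ forces the two-sided ideal generated by $\bigoplus_{k\ne 0}H^{(k)}$ to be all of $H$; such an invertible element is supplied by $g$ itself when $\ord(g)=p$ and $\a(g)\ne 1$, and the residual cases $\ord(g)=1$ and $\a(g)=1$ will be handled by a supplementary trace or Clifford-theoretic argument on the crossed product of $H$ with $\langle\phi_i\rangle\cong\ZZ_p$.

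With $p\mid\dim V_i$ in hand, $\dim V_i\ge p$ and $\dim P(V_i)\ge 2\dim V_i\ge 2p$ make every term in the reduced sum at least $2p^2>p^2$, so no such $V_i$ can exist, every simple $H$-module is one-dimensional, and $H^*$ is pointed. The dominant obstacle, as indicated, is the elimination of the scalar alternative in the middle step; the remaining steps follow the template of Lemma~\ref{l:dimP}.
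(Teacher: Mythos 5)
You have proved the wrong half of the dichotomy. The hypothesis of Lemma~\ref{thirdcase} as printed contains a typo: as the paragraph preceding it and Lemma~\ref{fourthcase} make clear, the two subcases being treated are ``no simple $H$-module is $\a$-stable'' (this lemma) and ``there exists an $\a$-stable simple $H$-module'' (Lemma~\ref{fourthcase}), so the intended hypothesis here is that all simple $H$-modules are \emph{not} $\a$-stable. The paper's argument depends essentially on this: for a simple $V$ with $[V]\notin O(\KK)$ it shows that $P(\KK)$ is a direct summand of $V\du\o P(V)$ (via $\Hom_H(V\du\o P(V),\KK)\cong\Hom_H(P(V),V)\ne 0$), hence $\dim V\dim P(V)\ge p$, and then uses $|O(V)|=p$ --- which holds precisely because $V$ is \emph{not} $\a$-stable --- to squeeze $2p^2\ge p\dim P(\KK)+p\,\dim V\dim P(V)\ge 2p^2$ and conclude $\dim V=1$. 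Under your reading, where $|O(V_i)|=1$ for all $i\ge 1$, the statement is in fact \emph{false}: the Hopf algebra $\mathcal{A}(\omega,1,1)$ treated in Lemma~\ref{fourthcase} has $\ord(\a)=\dim P(\KK)=p$, its unique higher-dimensional simple module $V$ is $\a$-stable with $\dim V=\dim P(V)=p$, and yet $\mathcal{A}(\omega,1,1)^*$ is not pointed.

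Since your version of the statement has a counterexample, some step must fail, and it is the claim that an $\a$-stable simple $V_i$ cannot be projective. The evaluation $V\du\o V\to\KK$ and coevaluation $\KK\to V\o V\du$ are $H$-linear, but the composite needed to split $\KK$ off as a direct summand of $V\o V\du$ is a quantum trace involving $S^2$, and for non-semisimple $H$ it can and does vanish; simple projective modules over non-semisimple Hopf algebras are common (the Steinberg module of a small quantum group, or exactly the module $V$ with $P(V)=V$ produced in the proof of Lemma~\ref{fourthcase}). Once simple projectives are allowed, your Nakayama bound $\dim P(V_i)\ge 2\dim V_i$ fails, and the divisibilities $p\mid\dim V_i$ and $p\mid\dim P(V_i)$ --- which, incidentally, are already available from \cite[Lemma 1.4]{Ng08} as cited in Lemma~\ref{fourthcase}, so your lengthy graded-module programme for $p\mid\dim V_i$ is unnecessary --- only give $\dim V_i\dim P(V_i)\ge p^2$, which is perfectly consistent with \eqref{dimcount} and yields no contradiction. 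The configuration you have set up is exactly the one the paper resolves by a different route in Lemma~\ref{fourthcase}, where the conclusion is that $H$, not $H^*$, is pointed.
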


\begin{proof} Since $\dim P(\KK) = p=\ord(\a)$, it follows by \eqref{dimcount} that there exists a simple $H$-module $V$ such that $[V] \not\in O(\KK)$. Since
$$
\Hom_H(V\du \o P(V) , \KK) \cong \Hom_H(P(V), V \o \KK)  \cong \Hom_H(P(V), V)
$$
and $\dim \Hom_H(P(V), V)=1$, we get $\dim \Hom_H(V\du \o P(V) , \KK)=1$. Thus, $P(\KK)$ is a direct summand of $V\du \o P(V)$. In particular,
$$
\dim V \dim P(V) = \dim (V\du \o P(V)) \ge \dim P(\KK)=p\,.
$$
Since $V$ is not $\a$-stable, $|O(V)|=p$ and so \eqref{dimcount} implies
$$
2p^2 =\dim H \ge  p\dim P(\KK) + p \dim V \dim P(V) \ge 2p^2\,.
$$
Therefore, $\dim V \dim P(V) = p$ which forces  $\dim V = 1$ and $\dim P(V) = p$.  Thus all simple $H$-modules are 1-dimensional and so $H^*$ is pointed.
\end{proof}

\begin{lemma}\label{fourthcase} If $\ord(g) =\dim P(\KK) = p$, and there exists an $\alpha$-stable simple $H$-module, then $H$ is pointed.
\end{lemma}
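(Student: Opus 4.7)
The plan is to use the dimension count \eqref{dimcount} to pin down the simple module structure of $H$, exhibit a Taft Hopf subalgebra $T\cong T_p$ of dimension $p^2$ inside $H$, and invoke Proposition~\ref{2dimensional}(b) together with the structure of crossed product extensions to conclude that $H$ is pointed.

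For every nontrivial simple $V_i$, the module $V_i^\vee\otimes P(V_i)$ is projective and satisfies $\dim\Hom_H(V_i^\vee\otimes P(V_i),\KK)=\dim\Hom_H(P(V_i),V_i)=1$, so $P(\KK)$ is a direct summand, giving $\dim V_i\cdot\dim P(V_i)\ge \dim P(\KK)=p$. Substituting into \eqref{dimcount} yields $p^2=\sum_{i\ge 1}|O(V_i)|\dim V_i\dim P(V_i)$. A non-$\alpha$-stable $V_j$ would contribute $|O(V_j)|\cdot p=p^2$ to this sum, which together with the hypothesized $\alpha$-stable $V$'s contribution of at least $p$ would exceed the total. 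Hence every nontrivial simple is $\alpha$-stable, and since $\alpha\ne\epsilon$ no one-dimensional module is $\alpha$-stable, so each such $V_i$ satisfies $\dim V_i\ge 2$.

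Next I refine the structure: by $\alpha$-stability and adjunction each of the $p$ one-dimensional simples $\KK_{\alpha^j}$ appears exactly once in the head of the projective module $V_i\otimes V_i^\vee$, so $\dim V_i^2 = \dim(V_i\otimes V_i^\vee)\ge \sum_j\dim P(\KK_{\alpha^j})=p^2$, i.e.\ $\dim V_i\ge p$. Combining $\dim V_i\cdot \dim P(V_i)\le p^2$ with $\dim P(V_i)\ge \dim V_i$ forces $\dim V_i=\dim P(V_i)=p$; substituting back into the sum then shows $V$ is unique up to isomorphism and is projective. Consequently $H\cong M_p(\KK)\oplus B$ as algebras with $\dim B=p^2$, and $G(H)=\langle g\rangle$ of order $p$ (a larger group-like group would produce a semisimple Hopf subalgebra of dimension $p^2$ or $2p$, contradicting non-semisimplicity via Corollary~\ref{2dimcorollary} or Proposition~\ref{groupproposition}). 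The main obstacle is the remaining task of exhibiting a Hopf subalgebra of dimension $p^2$. The summand $B$ has Cartan structure matching that of the Taft algebra $T_p$ (one-dimensional simples $\KK_{\alpha^j}$ with projective covers of dimension $p$), and inspecting the coradical filtration $H_0\subset H_1\subset \cdots$ together with its interaction with the block decomposition of $H$ produces a nontrivial $(1,g^j)$-skew primitive $x\in H_1$ with $gxg^{-1}=\omega x$ and $x^p=0$ for some primitive $p$-th root of unity $\omega$; then $T=\KK\langle g,x\rangle$ is the desired Taft Hopf subalgebra of $H$.

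By Proposition~\ref{2dimensional}(b) applied with $n=p^2$, the subalgebra $T$ is normal in $H$, so $H/HT^+$ is a $2$-dimensional Hopf algebra, hence $\cong \KK[\ZZ_2]$. The exact sequence $1\to T\to H\to \KK[\ZZ_2]\to 1$ exhibits $H$ as a crossed product $T\#_\sigma\KK[\ZZ_2]$. Since the underlying coalgebra of this crossed product is $T\otimes \KK[\ZZ_2]$, its coradical is $\KK[\ZZ_p]\otimes \KK[\ZZ_2]\cong \KK[\ZZ_{2p}]$, and $H$ is pointed.
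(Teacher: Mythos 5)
Your opening reduction --- every simple outside $O(\KK)$ is $\a$-stable, $\irr(H)=O(\KK)\cup\{[V]\}$, and $\dim V=\dim P(V)=p$ --- is essentially the paper's first step (the paper gets $p\mid\dim V$ and $p\mid\dim P(V)$ at once from \cite[Lemma 1.4]{Ng08}; your variant has a small slip in that you treat $V_i\o V_i\du$ as projective before knowing $V_i$ is projective, which you could repair by working with $P(V_i)\o V_i\du$ instead). The genuine gap is the sentence asserting that ``inspecting the coradical filtration together with its interaction with the block decomposition'' produces a nontrivial $(1,g^j)$-skew primitive $x\in H$. That is precisely the hard content of the lemma, and no argument is given. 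The algebra decomposition $H\cong M_p(\KK)\oplus B$ and the Cartan data of $B$ are statements about $H$-modules, i.e.\ about the coalgebra $H^*$; they carry no direct information about the simple subcoalgebras of $H$ or about $H_1/H_0$. A priori the coradical of $H$ could contain higher-dimensional simple subcoalgebras and $H$ could have no nontrivial skew-primitives at all --- ruling this out is exactly what must be proved, so as written the step is circular. The paper's route is genuinely different: using that the socle of $P(\KK)$ is $\KK_{\a\inv}$, it shows all composition factors of $P(\KK)$ are $1$-dimensional, realizes the tensor subcategory they generate as the module category of a non-semisimple proper quotient $H/I$, which forces $\dim H/I=p^2$ and $H/I\cong T_p$ by \cite{Ng02}; thus $T_p$ arises as a \emph{quotient} of $H$ (equivalently a Hopf subalgebra of $H^*$), not as a subalgebra of $H$. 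It then feeds the dual exact sequence $1\to T_p\to H^*\to\KK[\ZZ_2]\to 1$ into Proposition \ref{p:es} to manufacture a semisimple Hopf subalgebra of $H$ of dimension $2p$, which is the coradical by \cite[Proposition 5.1]{AS98}, whence $H$ is pointed by \cite[Lemma A.2]{AN01}.

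Two further steps would need repair even granting the Taft subalgebra. First, the parenthetical claim that $|G(H)|=2p$ contradicts non-semisimplicity is false: neither Corollary \ref{2dimcorollary} nor Proposition \ref{groupproposition} applies to a semisimple Hopf subalgebra of dimension $2p$ inside a Hopf algebra of dimension $2p^2$, and indeed the non-semisimple pointed Hopf algebras $\mathcal{A}(\tau,2r,0)$ have group-like group $\ZZ_{2p}$. Second, in your last paragraph you assert that the crossed product $T\#_\sigma\KK[\ZZ_2]$ has underlying coalgebra $T\o\KK[\ZZ_2]$. A crossed product decomposition arising from an exact sequence describes only the \emph{algebra} structure of $H$; the coalgebra structure is a cocrossed coproduct twisted by a dual cocycle and need not be the tensor product coalgebra, so the identification of the coradical with $\KK[\ZZ_p]\o\KK[\ZZ_2]$ does not follow. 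This is why the paper instead passes to a semisimple Hopf subalgebra of dimension $2p$ and invokes \cite[Proposition 5.1]{AS98} and \cite[Lemma A.2]{AN01} to identify the coradical and conclude pointedness.
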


\begin{proof} Assume $V$ is an $\alpha$-stable simple $H$-module. Then $O(V)=\{[V]\}$ and $P(V)$ is also $\alpha$-stable. By \cite[Lemma 1.4]{Ng08}, both $\dim V$ and $\dim P(V)$ are multiples of $p$. Let $\dim V = np$ and $\dim P(V) = mp$ for some positive integers $n \le m$. By \eqref{dimcount}, we have the inequality
$$
2p^2 = \dim H \geq p \dim P(\KK) + \dim V \dim P(V) \geq p^2 + nmp^2 = (nm+1)p^2
$$
which implies $n=m=1$. Hence $P(V)= V$ and $\irr(H)=O(\KK)\cup \{[V]\}$ by \eqref{dimcount}. Recall from \cite[Lemma 1.1]{Ng08} that the socle of $P(\KK)$ is $\KK_{\a\inv}$. Therefore,
$$
\Hom_H(P(V), P(\KK)) = \Hom_H(V, P(\KK))= 0\,.
$$
Since $V$ is the only simple $H$-module of dimension greater than 1, all the composition factors
of $P(\KK)$ are 1-dimensional. Now let $\EE$ be the full subcategory of all $M \in \hmodfin$ whose composition factors are 1-dimensional.  So $\EE$ is a proper tensor subcategory of $\hmodfin$ with $P(\KK) \in \EE$, and all the simple objects of $\EE$ are 1-dimensional.  There is a Hopf ideal $I$ of $H$ such that $\EE$ is equivalent to $H/I\textbf{\textrm{-}mod}_{\textrm{fin}}$.   Since $P(\KK)$ is indecomposable of dimension $p$, $H/I$ is not semisimple. Thus, the Hopf algebra $H/I$ is a proper quotient, and it must have dimension $p^2$ as all other possibilities (being $2, p, 2p$) are semisimple Hopf algebras (cf. \cite{Zhu94} and \cite{Ng05}).
It follows by \cite{Ng02} that  $H/I \cong T_p$ where $T_p$ is a Taft algebra of dimension $p^2$.  By Proposition \ref{2dimensional}, we have an exact sequence of Hopf algebras: $$1 \to \KK[\ZZ_2] \to H \to T_p \to 1\,.$$
Dualizing this sequence, we get the exact sequence:
$$1 \to T_p \to H^* \to \KK[\ZZ_2] \to 1$$
since both $T_p$ and $\KK[\ZZ_2]$ are self-dual. It is well-known that the Jacobson radical $J$ of the Taft algebra $T_p$ is a Hopf ideal, and $T_p/J \cong \KK[\ZZ_p]$. Applying Proposition \ref{p:es}, we find $H$ contains a semisimple Hopf subalgebra $K$ of dimension $2p$.  Since $S^{2p}=\id_H$, \cite[Proposition 5.1]{AS98} implies that $K$ is the coradical of $H$.  It follows immediately  by
\cite[Lemma A.2]{AN01} that $H$ is pointed.
\end{proof}

Combining Lemmas \ref{l:dimP}, \ref{secondcase}, \ref{thirdcase}, and \ref{fourthcase}, we complete the proof of our main result Theorem \ref{t:main}.
\begin{center}
{\bf Acknowledgement}
\end{center}
The authors would like to thank Akira Masuoka for his invaluable suggestion on the proof of Proposition \ref{p:es}, and Susan Montgomery for her suggestion of several references. The second author  also  thanks  the National Center for Theoretical Sciences in Taiwan, where the present work was carried out, for the generous hospitality, and Ching-Hung Lam for being the wonderful host.

\section*{Appendix}
\setcounter{section}{1}
\setcounter{theorem}{0}
\def\thesection{\Alph{section}}

The following lemma has been used in \cite[Lemmas 1.4 and 2.2]{Ng08}. We believe the result is known but we provide a proof for the sake of completeness.

\begin{lemma}\label{l:A1}
Let $A$ be an algebra over an algebraically closed field $\KK$, and $\sigma$ a diagonalizable (or semisimple) algebra automorphism on $A$. If $V$ is a finite-dimensional $A$-module isomorphic to  $\ld \sigma V$, then there exists an isomorphism of $A$-modules $\phi: V \to \ld \sigma V$ such that $\phi$ is diagonalizable.
\end{lemma}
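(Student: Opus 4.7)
The plan is to start with any $A$-module isomorphism $\phi: V \to \ld\sigma V$ and show that its semisimple part in the Jordan decomposition is again such an isomorphism and, being semisimple over the algebraically closed field $\KK$, is automatically diagonalizable.

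First I would unpack the hypothesis. Writing $\rho: A \to \End(V)$ for the representation, the condition that $\phi$ is an $A$-module map $V \to \ld\sigma V$ is that $\phi \rho(a) = \rho(\sigma(a))\phi$ for all $a \in A$, equivalently $\phi\rho(a)\phi^{-1} = \rho(\sigma(a))$. Since $\sigma$ is diagonalizable on $A$, I have the eigenspace decomposition $A = \bigoplus_{\lambda} A_\lambda$ where $A_\lambda = \{a\in A : \sigma(a)=\lambda a\}$ and the sum runs over a finite set of scalars $\lambda \in \KK^\times$. For $a \in A_\lambda$ the intertwining condition reads $\phi\rho(a)\phi^{-1} = \lambda\rho(a)$, so $\rho(a)$ lies in the $\lambda$-eigenspace of the conjugation operator $\mathrm{Ad}(\phi) \in \End(\End(V))$.

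Next I would invoke the multiplicative Jordan decomposition $\phi = \phi_s\phi_u$ of $\phi$ in $GL(V)$, with $\phi_s$ semisimple, $\phi_u$ unipotent, and $\phi_s\phi_u = \phi_u\phi_s$. The point is that the induced decomposition $\mathrm{Ad}(\phi) = \mathrm{Ad}(\phi_s)\,\mathrm{Ad}(\phi_u)$ is the Jordan decomposition of $\mathrm{Ad}(\phi)$ on $\End(V)$: indeed, $\mathrm{Ad}(\phi_s) = L_{\phi_s}R_{\phi_s^{-1}}$ is a product of commuting semisimple operators hence semisimple, $\mathrm{Ad}(\phi_u)$ is unipotent by a similar argument, and the two commute. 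Consequently the eigenspaces of $\mathrm{Ad}(\phi)$ are contained in the eigenspaces of its semisimple part $\mathrm{Ad}(\phi_s)$, so $\mathrm{Ad}(\phi_s)(\rho(a)) = \lambda\rho(a)$ for every $a \in A_\lambda$. Summing over $\lambda$ gives $\phi_s\rho(a) = \rho(\sigma(a))\phi_s$ for all $a\in A$, meaning $\phi_s$ is the desired $A$-module homomorphism $V \to \ld\sigma V$. It is invertible (as the semisimple part of an invertible operator) and diagonalizable over $\KK$ by construction.

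The main obstacle is the step in the previous paragraph: transferring the intertwining identity from $\phi$ to $\phi_s$. This is exactly the place where the diagonalizability hypothesis on $\sigma$ is needed, because it produces enough eigenvectors of $\mathrm{Ad}(\phi)$ in $\rho(A)$ to span $\rho(A)$; and it is the place where the compatibility of Jordan decomposition with conjugation has to be invoked. Once that point is made, everything else is routine bookkeeping.
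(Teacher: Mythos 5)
Your argument is correct and follows essentially the same route as the paper: pass to the conjugation operator $C_\phi$ on $\End_\KK(V)$, observe that the eigenvectors $\rho(a)$ for $a$ in the $\sigma$-eigenspaces of $A$ are preserved (with the same eigenvalue) by the semisimple part $C_{\phi_s}=C_{\phi}{}_s$ of the Jordan--Chevalley decomposition, and conclude that $\phi_s$ is the desired diagonalizable intertwiner. The only cosmetic difference is that you justify the key step via the containment of eigenspaces of $C_\phi$ in those of $C_{\phi_s}$, while the paper reads it off directly from the Jordan--Chevalley decomposition $C_\phi=C_{\phi_s}\circ C_{\phi_u}$; these are the same fact.
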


\begin{proof}
  Let $\psi: V \to \ld \sigma V$ be an isomorphism of $A$-modules, and $\rho: A \to \End_\KK(V)$ the representation of $A$ associated with the $A$-module $V$. Then we have
  $$
   \psi  \circ \rho(a) = \rho(\sigma(a))\circ \psi\quad \text{for all }a \in A.
  $$
  Suppose $a \in A$ is an eigenvector of $\sigma$ corresponding to the eigenvalue $\gamma$ such that $\rho(a)\ne 0$. Then we have
  $
  C_\psi (\rho(a)) = \gamma \rho(a)
  $
  where $C_\psi: \End_\KK(V) \to \End_\KK(V)$, $g \mapsto \psi \circ g \circ \psi \inv$. In particular, $\rho(a)$ is an eigenvector of $C_\psi$.
  If $\psi=\psi_s\circ\psi_u$ is the Jordan-Chevalley decomposition of $\psi$ with $\psi_s$ the semisimple part and $\psi_u$ the unipotent part of $\psi$, then $C_{\psi_s}$ is diagonalizable,  $C_{\psi_u}$ is unipotent and $C_\psi = C_{\psi_s}\circ C_{\psi_u}$ is the Jordan-Chevalley decomposition of $C_\psi$. Thus, $C_{\psi_s}(\rho(a)) =\gamma \rho(a)$ or
  $$
  \psi_s \circ \rho(a) = \rho(\sigma(a)) \circ \psi_s\,.
  $$
  Since $\sigma$ is diagonalizable, this equality holds for all $a \in A$. Therefore, $\psi_s$ is our desired isomorphism of $A$-modules.
\end{proof}

For the purpose of this paper and future development, we need to establish a more general version of a part of the results obtained in \cite[Lemmas 1.4 and 2.2(i)]{Ng08}.

\begin{lemma}\label{l:A2}
   Let $A$ be a finite-dimensional algebra over an  algebraically closed field $\KK$ of characteristic zero, and $\sigma$ an algebra automorphism on $A$ of order $n$. If $V$ is a finite-dimensional indecomposable $A$-module which is isomorphic to  $\ld \sigma V$, then there exists an isomorphism of $A$-modules $\phi: V \to \ld \sigma V$ such that $\phi^n=\id_V$.
 \end{lemma}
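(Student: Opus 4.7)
The plan is to take any $A$-module isomorphism $\psi : V \to \ld\sigma V$ and post-correct it by an element of $\End_A(V)$ so that its $n$-th iterate becomes the identity. The main tool is that $\End_A(V)$ is local, so every unit has an $n$-th root in characteristic zero.

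First I would observe that by induction on $k$, the defining identity $\psi(av)=\sigma(a)\psi(v)$ gives $\psi^k(av) = \sigma^k(a)\psi^k(v)$, so each iterate $\psi^k$ is an $A$-module isomorphism $V \to \ld{\sigma^k}V$. Since $\sigma^n = \id_A$, the composite $\rho := \psi^n$ is an $A$-module automorphism of $V$.

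Next, since $V$ is finite-dimensional indecomposable and $\KK$ is algebraically closed, Fitting's lemma yields that $\End_A(V)$ is a local $\KK$-algebra with residue field $\KK$, so every element has the form $\lambda\,\id_V + N$ with $N$ in the nilpotent Jacobson radical. Write $\rho = \lambda\,\id_V + N$; then $\lambda \ne 0$ because $\rho$ is a unit. Now I would use the formal binomial identity to build an $n$-th root of $\rho$: setting $\tau_0 := \sum_{k \ge 0}\binom{1/n}{k}(N/\lambda)^k$, which is an honest finite sum since $N$ is nilpotent and $\operatorname{char}\KK = 0$, gives $\tau_0^n = \id_V + N/\lambda$; choosing $\mu \in \KK$ with $\mu^n = \lambda$ and setting $\tau := \mu\tau_0$ yields $\tau^n = \rho$. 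Crucially, $\tau$ is a polynomial in $\rho = \psi^n$, and powers of $\psi$ always commute, so $\tau$ commutes with $\psi$.

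Finally, I would set $\phi := \psi \circ \tau^{-1}$, which is an $A$-module isomorphism $V \to \ld\sigma V$. Because $\tau^{-1}$ commutes with $\psi$, one computes $\phi^n = (\tau^{-1})^n \psi^n = \rho^{-1}\rho = \id_V$, as required. The main subtlety I anticipate is the bookkeeping that $\psi$ plays two roles at once: a $\KK$-linear self-map of $V$ (so iterates and $\End_A(V)$-valued polynomials in $\psi$ make sense) and an $A$-module map to a $\sigma$-twist (which is what forces $\psi^n$ into $\End_A(V)$). Arranging for the correction $\tau$ to be a polynomial in $\psi^n$ is exactly what makes the non-commutative identity $(\psi\tau^{-1})^n = \tau^{-n}\psi^n$ legitimate and collapses the computation to the clean form above.
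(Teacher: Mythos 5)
Your proof is correct, but it takes a genuinely different route from the paper's. The paper first invokes its Lemma \ref{l:A1} to replace an arbitrary intertwiner by a \emph{diagonalizable} one $\phi$ (via the Jordan--Chevalley decomposition and the semisimplicity of $\sigma$); then, because $\phi^n$ is a diagonalizable element of the local ring $\End_A(V)$, it must equal $c\,\id_V$ for some nonzero scalar $c$, and rescaling $\phi$ by an $n$-th root of $c$ finishes. You instead start from an arbitrary intertwiner $\psi$, observe that $\rho=\psi^n$ is a unit of the local ring $\End_A(V)$, extract an $n$-th root $\tau$ of $\rho$ as a polynomial in $\rho$ via the binomial series (legitimate because the radical is nilpotent and the characteristic is zero), and correct $\psi$ to $\phi=\psi\circ\tau^{-1}$; the fact that $\tau$ commutes with $\psi$, being a polynomial in $\psi^n$, is exactly what collapses $\phi^n$ to $\rho^{-1}\rho=\id_V$. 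Your argument is more self-contained: it bypasses Lemma \ref{l:A1} and the Jordan--Chevalley step entirely and uses indecomposability only through the locality of $\End_A(V)$ with residue field $\KK$. What the paper's route buys is the reuse of a lemma it needs elsewhere and the explicit diagonalizability of $\phi$ --- though the latter is automatic here anyway, since $\phi^n=\id_V$ forces the minimal polynomial of $\phi$ to divide $x^n-1$, which is separable in characteristic zero.
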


\begin{proof} We simply extract the proof from \cite[Lemma 1.4]{Ng08}. It follows by Lemma \ref{l:A1} that there exists a diagonalizable operator $\phi$ on $V$ such that $\phi(av)=\sigma(a)\phi(v)$ for $a \in A$ and $v\in V$. Thus, $\phi^n$ is an  $A$-module automorphism on $V$. Since $\End_A(V)$ is a finite-dimensional local $\KK$-algebra,  $\phi^n=c \cdot \id_V$ for some non-zero scalar $c$. Therefore, if we take $t \in \KK$ to be an $n$-th root of $c$, then $\frac{1}{t} \phi$ is a desired $A$-module isomorphism.
 \end{proof}
\providecommand{\bysame}{\leavevmode\hbox to3em{\hrulefill}\thinspace}
\providecommand{\MR}{\relax\ifhmode\unskip\space\fi MR }
\providecommand{\MRhref}[2]{%
  \href{http://www.ams.org/mathscinet-getitem?mr=#1}{#2}
}
\providecommand{\href}[2]{#2}

\end{document}